\documentclass{amsart}
\usepackage[table]{xcolor}
\usepackage{tikz}
\usepackage{fullpage}
\usepackage{url}
\usepackage{float}
\usepackage{mathtools}

\usepackage[colorlinks=true, citecolor=blue]{hyperref}
\usepackage{amsmath,amssymb,amsfonts,amsthm}
\usepackage{graphicx}

\title{\textbf{Decoupling and Discrete Strichartz Estimates for Dispersive Equations on the Torus}}
\author{Yuda Chen}
\email{math.yuda.chen@gmail.com}
\address{School of Mathematical Sciences and LPMC, Nankai University, Tianjin 300071, P.R. China}
\thanks{This work was conceived and written while the author was visiting the Department of Mathematics at UC Berkeley.}
\date{\today}
\everymath{\displaystyle}

\newtheorem{theorem}{Theorem}[section]
\newtheorem{proposition}{Proposition}[section]

\newtheorem{conjecture}[proposition]{Conjecture}
\theoremstyle{remark}
\newtheorem{remark}[proposition]{Remark}

\theoremstyle{definition}

\begin{document}

\begin{abstract}
	This paper proves weighted discrete Strichartz, or equivalently discrete restriction, estimates for two-dimensional exponential sums with fractional and perturbed polynomial phases.  The new results cover three related classes of curves: the fractional Schr\"odinger-type curve $(n,n^{1+\nu})$, the fully fractional curve $(n^{1+\mu},n^{1+\nu})$ with $0<\mu<\nu$, and bounded perturbations of the polynomial curve $(n,n^k)$.  In each case we obtain $\ell^2$-weighted $L^p$ estimates with decoupling-type dependence on the frequency scale, including both low- and high-$p$ regimes.  The proofs are based on finite-type decoupling, together with localization and rescaling arguments adapted to curvature degeneracy, anisotropic homogeneity, and bounded perturbations of the phase.  The introduction frames these estimates from the viewpoint of decoupling theory, with arithmetic results obtained by efficient congruencing used only as comparison points for the known cases.
\end{abstract}

	\maketitle
	\section{Introduction}
	We begin by reviewing the classical discrete Strichartz estimates, which arise naturally in the study of periodic solutions to dispersive partial differential equations. Throughout this paper, we identify the torus $\mathbb{T}$ with $\mathbb{R}/\mathbb{Z}$. A quintessential example is the initial value problem for the linear Schr\"odinger equation on the space-time torus $\mathbb{T} \times \mathbb{T}$:
	
	$$i\partial_t u(x,t) - \frac{1}{2\pi}\partial_x^2 u(x,t) = 0, \quad (x,t) \in \mathbb{T} \times \mathbb{T},$$
	
	$$u(x,0) = u_0(x), \quad x \in \mathbb{T}.$$
	
	By taking the Fourier transform in the spatial variable $x$, the $n$-th spatial Fourier coefficient $\widehat{u}(n,t)$ of the solution satisfies the ordinary differential equation:
	
	$$i\partial_t \widehat{u}(n,t) + 2\pi n^2 \widehat{u}(n,t) = 0.$$
	
	Solving this yields $\widehat{u}(n,t) = a_n e^{2\pi i n^2 t}$, where $a_n = \widehat{u}(n,0)$ is the $n$-th Fourier coefficient of the initial data $u_0(x)$. Consequently, the full solution to the periodic Schr\"odinger equation can be expressed as the exponential sum:
	
	$$u(x,t) = \sum_{n \in \mathbb{Z}} a_n e^{2\pi i(nx+n^2t)}.$$
	
	In practice, we are often interested in frequency-localized initial data. Suppose $u_0(x) = \sum_{n=0}^N a_n e^{2\pi i n x}$ for some integer $N \ge 1$. By Plancherel's theorem, the $L^2$ norm of the initial data is given by:
	
	$$\|u_0\|_{L^2(\mathbb{T})} = \left( \sum_{n=0}^N |a_n|^2 \right)^{\frac12}.$$
	
	A central problem in partial differential equations and harmonic analysis is understanding the $L^p$ integrability of the solution $u(x,t)$ in terms of the $L^2$ norm of its initial data. Equivalently, one is asking for restriction-type estimates for exponential sums supported on a discrete curve. For the Schr\"odinger equation this curve is the discrete parabola
	\[
		n\mapsto(n,n^2).
	\]
	This is elegantly answered by the foundational discrete Strichartz estimate, famously established by Bourgain's groundbreaking paper~\cite{Bourgain93a} in 1993 using arithmetic methods.
	
	\begin{proposition}[Bourgain~\cite{Bourgain93a}]\label{schrodinger}
		
		Let $N \ge 1$ be an integer and $\varepsilon > 0$. Suppose $u_0 \in L^2(\mathbb{T})$ is frequency-localized such that $u_0(x) = \sum_{n=0}^N a_n e^{2\pi i n x}$. Then for every $p\ge 2$, the corresponding solution $u(x,t)$ to the periodic linear Schr\"odinger equation satisfies the spacetime bound:
		
		$$\|u\|_{L^p(\mathbb{T}^2)} \le C_{p,\varepsilon} N^\varepsilon \left(1 + N^{\frac{1}{2}-\frac{3}{p}}\right) \|u_0\|_{L^2(\mathbb{T})},$$
		
		where $C_{p,\varepsilon}$ is a positive constant depending only on $p$ and $\varepsilon$.
		
		Equivalently, written purely in terms of the Fourier coefficients, one has the classical exponential sum estimate:
		
		$$\left\| \sum_{n=0}^N a_n e^{2\pi i(nx+n^2t)} \right\|_{L^p([0,1]^2)} \lesssim_{p,\varepsilon} N^\varepsilon \left(1 + N^{\frac{1}{2}-\frac{3}{p}}\right) \left( \sum_{n=0}^N |a_n|^2 \right)^{\frac12}.$$
		
	\end{proposition}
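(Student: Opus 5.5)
The plan is to deduce the estimate from the $\ell^2$ decoupling theorem of Bourgain--Demeter for the parabola, plus interpolation with trivial endpoints. The critical exponent is $p=6$, where $\frac12-\frac3p=0$, so the claim there is $\|\sum a_n e(nx+n^2t)\|_{L^6([0,1]^2)}\lesssim_\varepsilon N^\varepsilon \|a\|_{\ell^2}$.

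\emph{Step 1: rescaling.} Substitute $x=x'/N$, $t=t'/N^2$ and write $\xi_n=(n/N,n^2/N^2)$. These points lie on the parabola $P=\{(\xi,\xi^2):\xi\in[0,1]\}$ and are $1/N$-separated. Integrating over $[0,1]^2$ in the original variables becomes integrating over $[0,N]\times[0,N^2]$ in the new ones. By periodicity in $x$ and $t$, this equals, up to normalization, the average over the box $Q=[0,N^2]^2$. On $Q$ the function $\sum a_n e(\xi_n\cdot y)$ has Fourier support in the $N^{-2}$-neighborhood of $P$. Each point $\xi_n$ sits in its own cap of length $1/N$, which is the natural decoupling scale $\delta^{1/2}$ with $\delta=N^{-2}$.

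\emph{Step 2: apply decoupling at $p=6$.} The $\ell^2$ decoupling inequality for the parabola at $p=6$ gives
\[
\|F\|_{L^6(w_Q)}\lesssim_\varepsilon N^\varepsilon\Big(\sum_n\|a_n e(\xi_n\cdot y)\|_{L^6(w_Q)}^2\Big)^{1/2},
\]
where $w_Q$ is a weight adapted to $Q$. Each piece has constant modulus $|a_n|$, so its weighted $L^6$ norm equals $|a_n||Q|^{1/6}$ up to constants. Normalizing by $|Q|^{1/6}$ and undoing the rescaling yields the $L^6$ bound. The weight $w_Q$ dominates $1_Q$ and is absorbed by periodicity, since $F$ is periodic with periods $N$ and $N^2$ in the rescaled variables.

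\emph{Step 3: other exponents.} For $2\le p\le6$, Hölder on the probability space $[0,1]^2$ gives $\|u\|_p\le\|u\|_6$, which matches the bound $N^\varepsilon$ there. For $p>6$, use $\|u\|_\infty\le\sum|a_n|\le (N+1)^{1/2}\|a\|_{\ell^2}$ and interpolate: $\|u\|_p\le\|u\|_6^{6/p}\|u\|_\infty^{1-6/p}$. This gives $N^{\varepsilon+\frac12(1-\frac6p)}=N^{\varepsilon+\frac12-\frac3p}$, as required.

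The main obstacle is Step 2 itself, which is the full strength of Bourgain--Demeter decoupling. It is not proven earlier in the paper, so I would cite it. Alternatively, there is an arithmetic route at $p=6$: expanding $\|u\|_6^6$ counts solutions of $n_1+n_2+n_3=m_1+m_2+m_3$, $n_1^2+n_2^2+n_3^2=m_1^2+m_2^2+m_3^2$. Weighted by the $a_n$, one bounds this by $\max_{(s,r)}\#\{(n_i)\text{ with given sums}\}\cdot\|a\|_2^6$. The fibre count is $\lesssim N^\varepsilon$ via the divisor bound after reducing to counting representations by a binary quadratic form. This is Bourgain's original argument and would serve as a fallback for the critical step.
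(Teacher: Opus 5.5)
Your proposal is correct and follows the same route the paper indicates for this statement (via Bourgain--Demeter) and carries out in the proof of Theorem~\ref{singlefraction}, whose case $\nu=1$, $r=2$ is exactly this estimate: reduce to the $L^6$ endpoint, rescale so the frequencies become $1/N$-separated points on the parabola, apply $\ell^2$ decoupling on an $N^2\times N^2$ box using periodicity, and interpolate with the trivial $L^\infty$ bound. The differences are cosmetic. You cite the discrete exponential-sum form of decoupling directly rather than building the auxiliary function from Fourier bumps $\chi_0$, and you use H\"older instead of interpolation with $L^2$ for $p\le 6$, which gives the same bound here because the $L^6$ loss is only $N^{\varepsilon}$.
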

	If we consider the KdV equation, there is also a corresponding conjecture of discrete Strichartz estimate.
	\begin{conjecture}\label{kdvconjecture}
		
		Let $N \ge 1$ be an integer and $\varepsilon > 0$. Suppose $u_0 \in L^2(\mathbb{T})$ is frequency-localized such that $u_0(x) = \sum_{n=0}^N a_n e^{2\pi i n x}$. Then for every $p \ge 2$, the corresponding solution $u(x,t)$ to the periodic linear KdV equation satisfies the spacetime bound:
		
		$$\|u\|_{L^p(\mathbb{T}^2)} \le C_{p,\varepsilon} N^\varepsilon \left(1 + N^{\frac{1}{2}-\frac{4}{p}}\right) \|u_0\|_{L^2(\mathbb{T})},$$
		
		where $C_{p,\varepsilon}$ is a positive constant depending only on $p$ and $\varepsilon$.
		
		Equivalently, written purely in terms of the Fourier coefficients, one has the classical exponential sum estimate:
		
		$$\left\| \sum_{n=0}^N a_n e^{2\pi i(nx+n^3t)} \right\|_{L^p([0,1]^2)} \lesssim_{p,\varepsilon} N^\varepsilon \left(1 + N^{\frac{1}{2}-\frac{4}{p}}\right) \left( \sum_{n=0}^N |a_n|^2 \right)^{\frac12}.$$
		
	\end{conjecture}
	In 1993, using purely arithmetic methods, Bourgain~\cite{Bourgain93b} established Conjecture~\ref{kdvconjecture} for the cases $2\le p\le 6$. In 2013, Hu and Li~\cite{HL13} proved the cases $p\ge14$. Lai and Ding~\cite{LD18} improved it to $p\ge12$ in 2018. Later, Hughes and Wooley~\cite{HW22} used efficient congruencing methods to extend the weighted $\ell^2$ range to $p\ge10$ in 2022. There is also an important unweighted ninth-moment result: Wooley~\cite{Wooley15} proved an essentially optimal estimate for the ninth moment of the cubic Weyl sum with phase $\alpha n^3+\beta n$. In the present notation this gives
	\[
		\left\|\sum_{n=1}^N e^{2\pi i(nx+n^3t)}\right\|_{L^9([0,1]^2)}
		\lesssim_{\varepsilon} N^{5/9+\varepsilon},
	\]
	or, after comparison with $\|1\|_{\ell^2([1,N])}\sim N^{1/2}$, the exponent $E_9=1/18$. This $L^9$ result is for the constant coefficient case $a_n\equiv1$; it should not be confused with the weighted $\ell^2$ restriction estimate in Conjecture~\ref{kdvconjecture}.
	
	During the 2010s, decoupling theory underwent rapid development. Bourgain and Demeter~\cite{BD15} utilized the decoupling method to provide a new proof of Proposition~\ref{schrodinger}. For standard background on decoupling and its applications to restriction theory, see Demeter's book~\cite{Demeter20} and Guo's book~\cite{Guo26}. However, this method cannot be directly transplanted to the proof of Conjecture~\ref{kdvconjecture} in the expected range. Fortunately, in 2020, Biswas et al.~\cite{BGLSX20} introduced an improvement to the decoupling inequality for curves with vanishing curvature. We can leverage this idea to complete the decoupling-level proof of Conjecture~\ref{kdvconjecture} for the case $2\le p\le6$, but since this result is not one of the main contributions of the present paper, we record it only as a remark.
	
	\begin{remark}\label{kdv}
		Let $N \ge 1$ be an integer and $\varepsilon > 0$. Suppose $u_0 \in L^2(\mathbb{T})$ is frequency-localized such that $u_0(x) = \sum_{n=0}^N a_n e^{2\pi i n x}$. Then for every $p \ge 2$, the corresponding solution $u(x,t)$ to the periodic linear KdV equation satisfies the spacetime bound:
		
		$$\|u\|_{L^p(\mathbb{T}^2)} \le C_{p,\varepsilon} N^\varepsilon \left(1 + N^{\frac{1}{2}-\frac{3}{p}}\right) \|u_0\|_{L^2(\mathbb{T})},$$
		
		where $C_{p,\varepsilon}$ is a positive constant depending only on $p$ and $\varepsilon$.
		
		Equivalently, written purely in terms of the Fourier coefficients, one has the exponential sum estimate:
		
		$$\left\| \sum_{n=0}^N a_n e^{2\pi i(nx+n^3t)} \right\|_{L^p([0,1]^2)} \lesssim_{p,\varepsilon} N^\varepsilon \left(1 + N^{\frac{1}{2}-\frac{3}{p}}\right) \left( \sum_{n=0}^N |a_n|^2 \right)^{\frac12}.$$
		
		This estimate is sharp at the $L^6$ decoupling level, but it is weaker than Conjecture~\ref{kdvconjecture} for $p>6$.
		
		The proof of Theorem~\ref{singlefraction} also applies to this proposition.
	\end{remark}
	
	Building upon these classical frameworks, it is a natural mathematical progression to extend discrete Strichartz estimates to more general phase functions. We first consider the case where the spatial and temporal frequencies are dictated by polynomials $P$ and $Q$ with integer coefficients. In this setting, the most robust formulation is the exponential sum estimate associated with the polynomial curve
	\[
	n\mapsto (P(n),Q(n)).
	\]
	Since both $P(n)$ and $Q(n)$ take integer values for $n\in\mathbb Z$, the corresponding exponential sum is naturally defined on the space-time torus $[0,1]^2$.
	
	One may interpret this estimate as a periodic dispersive estimate whenever the polynomial parametrization genuinely determines a single spatial Fourier multiplier, for instance under the additional condition that $P(n)=P(m)$ implies $Q(n)=Q(m)$ on the relevant frequency set. In general, however, $P$ need not be injective on integers, and therefore the evolution is not always determined solely by the initial datum $\sum a_n e^{2\pi iP(n)x}$. For this reason, we formulate the general polynomial result directly at the level of exponential sums.
	
	\begin{remark}[Polynomial phases]\label{polynomial}
		Let $P$ and $Q$ be polynomials with integer coefficients with respective degrees $k_{1}$ and $k_{2}$ such that $1\le k_{1}\le k_{2}$, and suppose that $P'$ and $Q'$ are linearly independent over $\mathbb{Q}$. For any integer $N \ge 1$, any $\varepsilon > 0$, and any finite sequence of complex coefficients $\{a_n\}_{n=1}^N$, the following estimate holds for every $p \ge 2$:
		$$
		\left\| \sum_{n=1}^N a_n e^{2\pi i(P(n)x+Q(n)t)} \right\|_{L^p([0,1]^2)}
		\lesssim_{P,Q,p,\varepsilon}
		N^\varepsilon
		\left(1 + N^{\frac{1}{2}-\frac{3}{p}}\right)
		\left( \sum_{n=1}^N |a_n|^2 \right)^{\frac12}.
		$$
		This is a standard finite-type decoupling consequence for polynomial curves. Since the present paper focuses on fractional and perturbed phases, we keep this result only as a comparison point; the decoupling reduction behind it is briefly recalled in Section~\ref{polynomialproof}.
	\end{remark}
	
	Beyond integer-coefficient polynomials, modern analysis has naturally generalized this theory to fractional dispersion. For a fixed real number $\nu > 0$, we consider the fractional Schr\"odinger-type equation on the spatial torus and on a unit time interval:
	
	$$i\partial_t u(x,t) + 2\pi \left(-\frac{1}{4\pi^2}\partial_x^2\right)^{\frac{1+\nu}{2}} u(x,t) = 0, \quad (x,t) \in \mathbb{T} \times [0,1].$$
	
	Given the standard frequency-localized initial data $u_0(x) = \sum_{n=1}^N a_n e^{2\pi i n x}$, the spatial frequency remains the integer $n$, guaranteeing spatial periodicity. The temporal evolution yields the single-fractional phase $nx+n^{1+\nu}t$. Unless $\nu$ is an integer in a special compatible situation, the temporal factor is not $1$-periodic in $t$, so the estimate is naturally understood locally on the time interval $[0,1]$. For this problem, we establish the discrete Strichartz estimate across the full continuous spectrum of $p$:
	
	\begin{theorem}\label{singlefraction}
		Let $\nu > 0$. For any integer $N \ge 1$ and any $\varepsilon > 0$, suppose $u_0 \in L^2(\mathbb{T})$ is frequency-localized such that $u_0(x) = \sum_{n=1}^N a_n e^{2\pi i n x}$. Then for every $p \ge 2$, the corresponding solution $u(x,t)$ to the fractional dispersive equation satisfies the local spacetime estimate:
		
		$$\|u\|_{L^p(\mathbb{T}\times[0,1])} \le C_{\nu,p,\varepsilon} N^{E_p + \varepsilon} \|u_0\|_{L^2(\mathbb{T})},$$
		
		where the loss exponent $E_p$ is determined continuously across two regimes. With $r = \max\{1+\nu,2\}$, we have
		
		$$E_p = \begin{cases} \frac{(r-1-\nu)(p-2)}{4p}, & \text{if } 2 \le p \le 6, \\ \frac{1}{2} + \frac{r-4-\nu}{p}, & \text{if } p > 6. \end{cases}$$
		
		Equivalently, in terms of the Fourier coefficients:
		
		$$\left\| \sum_{n=1}^N a_n e^{2\pi i (n x + n^{1+\nu} t)} \right\|_{L^p([0,1]^2)} \lesssim_{\nu,p,\varepsilon} N^{E_p + \varepsilon} \left( \sum_{n=1}^N |a_n|^2 \right)^{\frac12}.$$
	\end{theorem}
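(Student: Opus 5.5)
The plan is to reduce to an $L^6$ estimate on dyadic frequency blocks, prove that estimate by $\ell^2$ decoupling for a curve of nonvanishing curvature, and then interpolate with trivial $L^2$ and $L^\infty$ bounds.

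\textbf{Dyadic reduction.} Write $F(x,t)=\sum_{n=1}^N a_n e^{2\pi i(nx+n^{1+\nu}t)}$ and split $[1,N]$ into $O(\log N)$ dyadic blocks $[M,2M)$. By the triangle inequality, the $\log N$ loss is absorbed into $N^\varepsilon$. Every candidate exponent below is nondecreasing in $M$. It therefore suffices to prove, for the block sum $F_M$,
\[
\|F_M\|_{L^p([0,1]^2)}\lesssim_{\nu,p,\varepsilon} M^{E_p+\varepsilon}\Big(\sum_{n\sim M}|a_n|^2\Big)^{1/2},
\]
with $E_p$ as in Theorem~\ref{singlefraction}. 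Bounded $M$ is trivial.

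\textbf{Key $L^6$ estimate.} Substitute $n=M\xi$ with $\xi\in[1,2)$, and rescale $x'=Mx$, $t'=M^{1+\nu}t$. The frequencies then lie on the curve $\xi\mapsto(\xi,\xi^{1+\nu})$, $\xi\in[1,2]$. This is a $C^2$ curve with curvature $\asymp_\nu 1$, and its frequency points are $1/M$-separated. The spatial domain becomes $[0,M]\times[0,M^{1+\nu}]$. Since $F$ is $1$-periodic in $x$, the $x'$-range can be extended by periodicity to any length, in particular $M^2$.
\begin{itemize}
\item Suppose $\nu\ge1$. Then $M^{1+\nu}\ge M^2$, so the domain can be tiled by squares of side $M^2$. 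Apply Bourgain--Demeter $\ell^2$ decoupling for curved planar curves at scale $\delta=M^{-2}$ on each square (with the usual weights). Each cap of width $M^{-1}$ then contains $O(1)$ frequencies, which gives
\[
\|F_M\|_{L^6}\lesssim M^{\varepsilon}\|a\|_{\ell^2},
\]
after averaging over squares. This is $E_6=0$, consistent with $r=1+\nu$.
\item Suppose $\nu<1$. The time extent $M^{1+\nu}$ is shorter than $M^2$. We can still tile by squares of side $R=M^{1+\nu}$ (using periodicity in $x'$) and decouple at scale $R^{-1}$ into caps of width $R^{-1/2}$. Each such cap contains a block $I$ of $L\asymp M^{(1-\nu)/2}$ consecutive integers. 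Decoupling gives
\[
\|F_M\|_{L^6}\lesssim M^{\varepsilon}\Big(\sum_I\|F_I\|_{L^6}^2\Big)^{1/2}.
\]
On each block, combine $L^2$ orthogonality in $x$ with the Cauchy--Schwarz bound $\|F_I\|_\infty\le L^{1/2}\|a\|_{\ell^2(I)}$. This yields $\|F_I\|_{L^6}\le L^{1/3}\|a\|_{\ell^2(I)}$, hence
\[
E_6=\tfrac{1-\nu}{6}=\tfrac{r-1-\nu}{6}.
\]
\end{itemize}

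\textbf{Interpolation.} For $2\le p\le6$, interpolate (Hölder/log-convexity) between the exact $L^2$ identity ($E_2=0$) and the $L^6$ bound. Writing $\frac1p=\frac{1-\theta}{2}+\frac{\theta}{6}$ gives $\theta=\frac{3(p-2)}{2p}$, and so
\[
E_p=\theta\cdot\frac{r-1-\nu}{6}=\frac{(r-1-\nu)(p-2)}{4p}.
\]
For $p>6$, use $\|F\|_{L^p}\le\|F\|_\infty^{1-6/p}\|F\|_{L^6}^{6/p}$ together with $\|F\|_\infty\le N^{1/2}\|a\|_{\ell^2}$. This gives
\[
E_p=\tfrac12-\tfrac3p+\tfrac{r-1-\nu}{p}=\tfrac12+\tfrac{r-4-\nu}{p},
\]
matching both the $\nu\ge1$ and $\nu<1$ formulas.

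\textbf{Main obstacle.} The delicate step is the discrete-to-continuous transfer in the $L^6$ estimate. The time variable is not periodic, so one must check two things. First, the non-periodic rescaled time interval combined with the periodically extended $x'$-interval admits a covering by squares of the decoupling scale with only $O(1)$ overlap. Second, the weighted decoupling inequality on each square correctly sums to the global $L^6$ norm normalized by the area $M\cdot M^{1+\nu}$. I would handle this with the standard weight $w_B$ and local constancy of each cap piece on dual rectangles. In the case $\nu<1$, one must additionally check that the block pieces $F_I$ satisfy the $L^2$ orthogonality over a single period in $x$; this holds because their $x$-frequencies are distinct integers.
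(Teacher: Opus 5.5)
Your proof is correct, but it takes a genuinely different route from the paper's.

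\textbf{The paper's route.} The paper does not decompose dyadically in $n$. It places bumps of size $N^{-r}$ at the points $(n/N,(n/N)^{1+\nu})$ for all $1\le n\le N-1$, so the frequency curve runs into the degenerate point $\xi_1=0$. It then applies the finite-type decoupling of Proposition~\ref{decouplingmodel} (from \cite{BGLSX20}) once, at $\delta=N^{-1}$, on a single $N^2\times N^r$ rectangle. The lower bound comes from restricting to $[0,N^2]\times[0,N^{1+\nu}]$. The loss $N^{(r-1-\nu)/6}$ is the area ratio $(N^{2+r}/N^{3+\nu})^{1/6}$ between the decoupling rectangle and the region where the exponential sum is recovered. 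The dichotomy $\nu\ge1$ versus $\nu<1$ is hidden inside $r=\max\{2,1+\nu\}$.

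\textbf{Your route.} You pigeonhole $n\sim M$, which rescales each block onto $\xi\in[1,2]$, where the curvature is $\asymp_\nu 1$. Only the classical non-degenerate theorem (Proposition~\ref{decouplingparabolalike}) is then needed. The case split becomes explicit: you compare the rescaled time window $M^{1+\nu}$ with the scale $M^2$ required to separate $1/M$-spaced frequencies. For $\nu<1$ you decouple only at the scale $R=M^{1+\nu}$ that the window allows. You then pay $L^{1/3}=M^{(1-\nu)/6}$ inside each cap via the $L^2$--$L^\infty$ bound. I checked the weighted $L^2$ step, using periodicity with period $M\le R$, the $O(1)$-overlap coverings after periodic extension (by multiples of $M$), and the interpolation exponents; all are fine.

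\textbf{What each approach buys.} Your argument is more elementary. It never approaches $\xi=0$, so it does not rely on the vanishing-curvature result. It also shows plainly that the loss for $\nu<1$ comes from the unit time interval being too short to resolve neighbouring frequencies. This is the same dyadic rescaling the paper itself uses to prove Proposition~\ref{decouplingdouble}. The costs are a harmless $\log N$ from summing blocks and a two-case argument. The paper's single bump-and-area-comparison template absorbs the case split into $r$, and is reused essentially verbatim for Theorems~\ref{doublefraction} and~\ref{perturbation}.
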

	
	The preceding result still uses the integer spatial frequency $n$. We next remove this structure and study a genuinely two-dimensional fractional curve. Let $0<\mu<\nu$ and consider
	\[
		n\mapsto(n^{1+\mu},n^{1+\nu}).
	\]
	This is no longer the graph of a periodic spatial multiplier on $\mathbb T$ in the usual sense, because the first coordinate is fractional. Nevertheless, the associated exponential sum is a natural model for discrete restriction on a fully fractional curve. This case is the main theorem of the paper. The proof requires an anisotropic version of the finite-type decoupling argument, since the two coordinates now have different fractional homogeneities.
	
	\begin{theorem}\label{doublefraction}
		Let $0 < \mu < \nu$. For any integer $N \ge 1$, any $\varepsilon > 0$, and any finite sequence of complex coefficients $\{a_n\}_{n=1}^N$, the following discrete spacetime estimate holds on the unit square $[0,1]^2$ for all $p \ge 2$:
		
		$$\left\| \sum_{n=1}^N a_n e^{2\pi i (n^{1+\mu}x + n^{1+\nu}t)} \right\|_{L^p([0,1]^2)} \lesssim_{\mu,\nu,p,\varepsilon} N^{E_p + \varepsilon} \left( \sum_{n=1}^N |a_n|^2 \right)^{\frac12},$$
		
		where the loss exponent $E_p$ is determined continuously across two regimes. With $r_1 = \max\{2, 1+\mu\}$ and $r_2 = \max\{2, 1+\nu\}$, we have
		
		$$E_p = \begin{cases} \frac{(p-2)(r_1 + r_2 - 2 - \mu - \nu)}{4p}, & \text{if } 2 \le p \le 6, \\ \frac{1}{2} + \frac{r_1 + r_2 - 5 - \mu - \nu}{p}, & \text{if } p > 6. \end{cases}$$
	\end{theorem}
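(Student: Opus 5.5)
The plan is to reduce everything to two endpoint estimates, $p=2$ with $E_2=0$ and $p=6$ with
\[
E_6=\frac{c}{6},\qquad c:=r_1+r_2-2-\mu-\nu=(1-\mu)_++(1-\nu)_+ ,
\]
and then interpolate. For $2\le p\le 6$, Hölder's inequality gives $\|F\|_p\le\|F\|_2^{1-\theta}\|F\|_6^{\theta}$ with $\theta=\frac{3(p-2)}{2p}$. This yields the exponent $\theta\cdot\frac c6=\frac{(p-2)c}{4p}$, which is exactly the stated formula. For $p>6$, we combine the trivial bound $\|F\|_\infty\le N^{1/2}\|a\|_{\ell^2}$ (Cauchy--Schwarz) with $\|F\|_p^p\le\|F\|_\infty^{p-6}\|F\|_6^6$. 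This gives the exponent $\frac{p-6}{2p}+\frac{6E_6}{p}=\frac12+\frac{c-3}{p}$, again matching the statement. So the whole theorem rests on the $L^2$ and $L^6$ bounds.

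We first split $[1,N]$ into $O(\log N)$ dyadic blocks $n\sim M$ and use the triangle inequality, so it suffices to prove the two endpoint bounds for $F_M=\sum_{n\sim M}a_ne^{2\pi i(n^{1+\mu}x+n^{1+\nu}t)}$.

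For $p=2$: consecutive first coordinates satisfy $(n+1)^{1+\mu}-n^{1+\mu}\gtrsim M^{\mu}\ge1$. Hence the frequencies are $1$-separated in $x$, and a Montgomery--Vaughan/large-sieve type almost orthogonality on $[0,1]$ gives $\|F_M\|_{L^2([0,1]^2)}\lesssim\|a\|_{\ell^2}$.

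For $p=6$, we rescale $n=M\sigma$ with $\sigma\in[1,2]$. The curve $\sigma\mapsto(\sigma^{1+\mu},\sigma^{1+\nu})$ has curvature bounded above and below on $[1,2]$, because $\det(\gamma',\gamma'')\sim(\nu-\mu)\sigma^{\mu+\nu-1}\neq0$ when $\mu<\nu$. The points are $M^{-1}$-separated in $\sigma$. The change of variables $x'=M^{1+\mu}x$, $t'=M^{1+\nu}t$ turns $[0,1]^2$ into the anisotropic box $[0,M^{1+\mu}]\times[0,M^{1+\nu}]$. If both side lengths are at least $M^2$ (that is, $\mu,\nu\ge1$, so $c=0$), we apply Bourgain--Demeter $\ell^2L^6$ decoupling for the parabola-type curve at scale $\delta=M^{-2}$. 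Each cap then holds one point, and the local $L^6$ average of a single exponential is trivial. This gives $M^{\varepsilon}$.

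In general, a side shorter than $M^2$ prevents decoupling down to individual points. We therefore decouple only into caps of $\sigma$-length $M^{-1}L$, i.e.\ blocks of $L$ consecutive $n$. We choose the largest scale whose canonical $\delta\times\delta^{1/2}$ boxes, in the anisotropic coordinates, are dual to rectangles contained in the available box. The $x'$-deficit costs a factor governed by $(1-\mu)_+$, and the $t'$-deficit one governed by $(1-\nu)_+$. Because the two deficits act along different axes, they should add, which is consistent with the sum defining $c$. Each block is then estimated trivially in $L^6$ by interpolating its $L^2$ bound, from the separation above, with its $L^\infty$ bound $L^{1/2}\|a\|_{\ell^2(\text{block})}$. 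The goal is a total loss of $M^{c/6}$ after summing the blocks in $\ell^2$.

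The main obstacle is this last step: justifying decoupling on a box that is not periodic and is too short in one or both directions, and tracking the anisotropy correctly. Concretely, I would cover the rescaled box by translates of the smallest admissible box, or equivalently insert a smooth weight, so as to dominate $\|F_M\|_{L^6([0,1]^2)}$ by a weighted decoupling norm. The block length $L$ would then be optimized against this to produce exactly $M^{c/6}$. If the naive block estimate loses more than $M^{c/6}$, the fallback is to use, inside each block, a lower-dimensional (one-variable van der Corput/Weyl-difference or 1D parabola) estimate in the direction that still has enough room, rather than the trivial $L^\infty$ bound.
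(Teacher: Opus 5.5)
Your reductions are sound: the interpolation arithmetic, the dyadic splitting, the $L^2$ bound from the $1$-separation of $n^{1+\mu}$, and the case $\mu,\nu\ge1$ are all correct. The gap is the case $c>0$, which is the whole content of the theorem. There you only announce the target $M^{c/6}$, and the heuristic you give for it is wrong. In the rescaled box $[0,M^{1+\mu}]\times[0,M^{1+\nu}]$, the normals to $\gamma(\sigma)=(\sigma^{1+\mu},\sigma^{1+\nu})$, $\sigma\in[1,2]$, have both components of size $\sim1$. A cap at thickness $R^{-1}$ has a dual plank of length $R$ along the normal, so these planks fit in the box only if $R\lesssim M^{1+\mu}$. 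The longer side $M^{1+\nu}$ buys nothing, so under partial decoupling the $x'$- and $t'$-deficits do \emph{not} add.

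The step is nevertheless easy to close in your framework, and it gives more than you need. Take $R=\min(M^{1+\mu},M^2)$ and cover the box by $R$-squares $Q$. Apply Bourgain--Demeter to $G\phi_Q$, whose Fourier support lies in the $R^{-1}$-neighbourhood of $\gamma$, with caps $\theta$ containing $L\sim MR^{-1/2}=M^{(1-\mu)_+/2}$ consecutive $n$, and sum over $Q$ by Minkowski. In the original variables the resulting weight is $\lesssim(1+\operatorname{dist}((x,t),[0,1]^2))^{-100}$, so the lack of periodicity is harmless. On every unit square, $\|G_\theta\|_{L^6}\le\|G_\theta\|_{L^\infty}^{2/3}\|G_\theta\|_{L^2}^{1/3}\lesssim L^{1/3}\|a_\theta\|_{\ell^2}$, because the Montgomery--Vaughan bound is uniform over translates. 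This gives $\|G_M\|_{L^6([0,1]^2)}\lesssim M^{(1-\mu)_+/6+\varepsilon}\|a\|_{\ell^2}$. Since $(1-\mu)_+\le c$, with strict inequality when $\nu<1$, the theorem follows and no van der Corput fallback is needed.

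The additive exponent in the statement comes from the paper's different bookkeeping. It proves an anisotropic decoupling for the whole curve $(t^{1+\mu},t^{1+\nu})$ at thickness $\delta^{r_1}\times\delta^{r_2}$, with the dyadic rescaling done inside the decoupling proof rather than on the sum. It then decouples all the way to single frequencies on a rectangle $R_\delta$ of sides $N^{r_1}\times N^{r_2}$ containing the natural box $D=[0,N^{1+\mu}]\times[0,N^{1+\nu}]$. That costs exactly the volume ratio $(|R_\delta|/|D|)^{1/6}=N^{c/6}$. Non-periodicity is handled there by Fourier bumps whose inverse transform is bounded below on $D$, so only a lower bound on $D$ is needed.
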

	
	We finally examine the stability of the discrete Strichartz estimate under bounded perturbations of the dispersion relation. Let $k\ge2$ be an integer and let $f:\mathbb{Z}\to\mathbb{R}$ be a bounded function. For frequency-localized initial data supported on frequencies $\{0,1,\dots,N\}$, the associated local wave packet on $\mathbb{T}\times[0,1]$ takes the form
	
	$$u(x,t)=\sum_{n=0}^N a_n e^{2\pi i\left(nx+(n^k+f(n))t\right)}.$$
	
	The temporal factor is not necessarily $1$-periodic when $f(n)$ is not integer-valued, so the theorem is formulated directly as a local exponential sum estimate on $[0,1]^2$. The following theorem shows that the decoupling-level estimate obtained in the unperturbed monomial case remains valid, with the same dependence on $N$, for any bounded perturbation.
	
	\begin{theorem}\label{perturbation}
		For any integer $N \ge 1$, any integer $k\ge2$, any $\varepsilon > 0$, any sequence $\{\theta_n\}_{n=0}^N$ satisfying $\sup_{0\le n\le N} |\theta_n| \le C$, and any finite sequence of complex coefficients $\{a_n\}_{n=0}^N$, the following discrete spacetime estimate holds on the unit square $[0,1]^2$ for all $p \ge 2$:
		
		$$\left\|\sum_{n=0}^N a_n e^{2\pi i (n x + (n^k+\theta_{n})t)}\right\|_{L^p([0,1]^2)} \lesssim_{k,p,\varepsilon,C} N^\varepsilon \left(1+N^{\frac12-\frac3p}\right) \left(\sum_{n=0}^N |a_n|^2\right)^{\frac12}.$$
	\end{theorem}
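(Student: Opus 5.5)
The plan is to reduce Theorem~\ref{perturbation} to the unperturbed estimate for the curve $(n,n^k)$, which is the case $P(n)=n$, $Q(n)=n^k$ of Remark~\ref{polynomial}. The reduction uses the fact that a bounded perturbation of the time frequency is invisible at the unit time scale.

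Write $\theta_n$ for the perturbation and set
\[
F(x,t)=\sum_{n=0}^N a_n e^{2\pi i(nx+n^kt)}e^{2\pi i\theta_n t}.
\]
For $t\in[0,1]$ the extra factor $e^{2\pi i\theta_n t}$ is a smooth function of $t$ depending on $n$. Fix a smooth cutoff $\eta$ equal to $1$ on $[0,1]$ and supported in $[-1,2]$, and expand
\[
\eta(t)e^{2\pi i\theta t}=\int_{\mathbb{R}}\widehat{g_\theta}(\tau)\,e^{2\pi i\tau t}\,d\tau, \qquad g_\theta(t)=\eta(t)e^{2\pi i\theta t}.
\]
Since $|\theta|\le C$, all derivatives of $g_\theta$ are bounded uniformly in $\theta$. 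Hence $|\widehat{g_{\theta}}(\tau)|\lesssim_{C,M}(1+|\tau|)^{-M}$ uniformly in $\theta$.

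On $[0,1]^2$ this gives
\[
F(x,t)=\int_{\mathbb{R}}\sum_n a_n\widehat{g_{\theta_n}}(\tau)\,e^{2\pi i(nx+(n^k+\tau)t)}\,d\tau .
\]
Here the $\tau$-modulation is a common phase $e^{2\pi i\tau t}$, but the coefficients $b_n(\tau)=a_n\widehat{g_{\theta_n}}(\tau)$ depend on $n$. Minkowski's inequality then yields
\[
\|F\|_{L^p([0,1]^2)}\le\int_{\mathbb{R}}\Bigl\|\sum_n b_n(\tau)e^{2\pi i(nx+n^kt)}\Bigr\|_{L^p([0,1]^2)}\,d\tau .
\]
I apply Remark~\ref{polynomial} to each inner norm. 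It gives the bound
\[
N^\varepsilon\bigl(1+N^{\frac12-\frac3p}\bigr)\Bigl(\sum_n|b_n(\tau)|^2\Bigr)^{1/2}\lesssim N^\varepsilon\bigl(1+N^{\frac12-\frac3p}\bigr)(1+|\tau|)^{-M}\|a\|_{\ell^2}.
\]
Integrating in $\tau$ finishes the proof.

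The key step is thus the uniform decay of $\widehat{g_\theta}$, which is immediate because $g_\theta$ is a compactly supported smooth function whose $C^M$ norms are controlled by $C$. The constant depends on $C$ through $\sup_{|\theta|\le C}\|g_\theta\|_{C^{M}}\lesssim(1+C)^M$, which is consistent with the dependence $\lesssim_{k,p,\varepsilon,C}$ in the statement.

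The main obstacle I anticipate is whether Remark~\ref{polynomial}, stated on the torus, may legitimately be invoked for $(n,n^k)$. Taking $P(n)=n$ and $Q(n)=n^k$, the derivatives $P'=1$ and $Q'=kn^{k-1}$ are linearly independent for $k\ge2$, so the remark applies. If one prefers not to rely on the remark, the fallback is to rerun the finite-type decoupling for $(\xi,\xi^k)$ directly. This means decoupling into $N^{-1}$-caps of the rescaled curve at $\ell^2L^6$ with $N^\varepsilon$ loss, which is the Biswas et al.\ argument that handles the degenerate point $\xi=0$ by dyadic localization and rescaling. One then applies the decoupling on $[0,N]\times[0,N^k]$ after rescaling $(x,t)\mapsto(x/N,t/N^k)$. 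The perturbation shifts each frequency by $\le C\le O(1)$, which in the rescaled picture is $O(N^{-k})$, well inside the $N^{-k}$-neighborhood used for decoupling, so the same argument goes through. Interpolating $L^2$ (orthogonality, via integer $x$-frequencies), $L^6$ (decoupling) and the trivial $L^\infty$ bound $N^{1/2}\|a\|_2$ then gives the stated range $p\ge2$.
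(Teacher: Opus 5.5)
Your argument is correct, but it follows a different route from the paper's. Your fallback sketch is essentially the paper's proof. The paper reruns the decoupling argument of Theorem~\ref{singlefraction} on the perturbed sum:
\begin{itemize}
\item it places bumps of size $N^{-k}$ at $\bigl(\frac nN,\frac{n^k+\theta_n}{N^k}\bigr)$;
\item it notes that $|\theta_n|N^{-k}\le C\delta^k$, so these bumps lie in a fixed enlargement of the $\delta^k$-neighbourhood of $(s,s^k)$;
\item it applies Proposition~\ref{decouplingmodel} with $\nu=k-1$ at $p=6$;
\item it recovers the discrete sum via $y_1=Nx$, $y_2=N^kt$, and interpolates with the $L^2$ and $L^\infty$ bounds.
\end{itemize}

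Your main argument is instead a soft transference that never reopens the decoupling geometry. The key identity is $\widehat{g_\theta}(\tau)=\widehat\eta(\tau-\theta)$. So on $[0,1]$ each factor $e^{2\pi i\theta_n t}$ is an average of common modulations $e^{2\pi i\tau t}$, with weights bounded by $O_{C,M}\bigl((1+|\tau|)^{-M}\bigr)$ uniformly in $n$. A common modulation does not change the $L^p$ norm, so Minkowski reduces everything to the unperturbed estimate with arbitrary coefficients $b_n(\tau)$.

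For that unperturbed input you do not need Remark~\ref{polynomial}, which the paper only sketches. Theorem~\ref{singlefraction} with $\nu=k-1$ has $r=k$, hence $E_p=\bigl(\frac12-\frac3p\bigr)_+$, and it is proved in full; the $n=0$ term is trivial.

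\textbf{What the paper's route buys.} It stays inside the decoupling framework and displays the mechanism the paper wants to emphasise: perturbed frequency points remain in the same frequency tube. It uses only the continuous decoupling theorem, not a prior discrete estimate.

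\textbf{What your route buys.} It is a black box. Any weighted $\ell^2\to L^p([0,1]^2)$ bound for $(n,n^k)$ transfers to bounded perturbations with the same exponent, at the cost of an $O_C(1)$ constant. This includes the arithmetic bounds in the table, for example $\bigl(\frac12-\frac4p\bigr)_+$ for $k=3$ and $p\ge10$. So your argument improves the large-$p$ range of Theorem~\ref{perturbation}. It also shows that the paper's later remark is overstated: the paper says such perturbed estimates are not easily reached by arithmetic methods, but under your argument one never has to expand moments involving the $\theta_n$.
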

	
	This result demonstrates that the discrete Strichartz estimate is robust under bounded modifications of the phase function. Even when the pure monomial $n^k$ is replaced by $n^k+\theta_n$ with $\{\theta_n\}$ bounded, the same growth exponent $\frac12-\frac3p$ (up to an $N^\varepsilon$ loss) persists. Hence the geometric structure used by decoupling is stable against perturbations that destroy the underlying polynomial arithmetic.
	
	Apart from certain specialized techniques, there are two primary approaches to problems involving discrete Strichartz estimates: decoupling and efficient congruencing. The former is the central methodology employed in this paper, while the latter is a number-theoretic approach pioneered by Wooley across a series of publications~\cite{Wooley12,Wooley13,Wooley16,Wooley17a,Wooley17b,Wooley19}. The table below presents known results and conjectured bounds for different exponents $E_p$ generated by different pairs $(g(n),h(n))$ in
	{\setlength{\fboxsep}{0.5pt}
	$$\left\| \sum_{n=1}^N a_n e^{2\pi i \left(\boxed{\scriptstyle g(n)}x + \boxed{\scriptstyle h(n)}t\right)} \right\|_{L^p([0,1]^2)} \lesssim_{p,\varepsilon} N^{\boxed{\scriptstyle E_{p}} + \varepsilon} \left( \sum_{n=1}^N |a_n|^2 \right)^{\frac12}.$$}
	
	{\begin{table}[H]
			\centering
		\renewcommand{\arraystretch}{1.35}
		\resizebox{\textwidth}{!}{\begin{tabular}{c|c|c|c|c}
		  $ (g(n),h(n)) $ & $ E_{p} $ by decoupling methods & $ E_{p} $ by efficient congruencing methods & $ E_{p} $ by other methods & Conjecture $ E_{p} $\\\hline
		  
		   $ (n,n^{2}) $ & \cellcolor{gray!30} Bourgain--Demeter~\cite{BD15} $ \left(\frac{1}{2}-\frac{3}{p}\right)_{+} $ & \cellcolor{gray!30} Wooley~\cite{Wooley19} $ \left(\frac{1}{2}-\frac{3}{p}\right)_{+} $ & \cellcolor{gray!30} Bourgain~\cite{Bourgain93a} $ \left(\frac{1}{2}-\frac{3}{p}\right)_{+} $ & $ \left(\frac{1}{2}-\frac{3}{p}\right)_{+} $ \\\hline
		   
		    $ (n,n^{3}) $ & Remark~\ref{kdv} $ \left(\frac{1}{2}-\frac{3}{p}\right)_{+} $ & \cellcolor{gray!30} \begin{tabular}{l}
		    	Hughes--Wooley~\cite{HW22} $ \left(\frac{1}{2}-\frac{4}{p}\right)_{+} $ for $ p\in[2,6]\cup[10,\infty) $\\
		    	Wooley~\cite{Wooley15} $ \left(\frac{1}{2}-\frac{4}{p}\right)_{+} $ for $ p\in[9,\infty) $ but $a_n\equiv1$ only
		    \end{tabular} & \begin{tabular}{l}
		    	Bourgain~\cite{Bourgain93b} $ \left(\frac{1}{2}-\frac{3}{p}\right)_{+} $\\
		    	Hu--Li~\cite{HL13} $ \left(\frac{1}{2}-\frac{4}{p}\right)_{+} $ for $ p\in[14,\infty) $\\
		    	Lai--Ding~\cite{LD18} $ \left(\frac{1}{2}-\frac{4}{p}\right)_{+} $ for $ p\in[12,\infty) $\\
		    	
		    \end{tabular} & $ \left(\frac{1}{2}-\frac{4}{p}\right)_{+} $ \\\hline
		    
		    $ \substack{(Poly_{k_{1}}(n),Poly_{k_{2}}(n))\\1\le k_{1}\le k_{2}} $ & Remark~\ref{polynomial} $ \left(\frac{1}{2}-\frac{3}{p}\right)_{+} $ &\cellcolor{gray!30} \begin{tabular}{l}Wooley~\cite{Wooley19} and Hughes--Wooley~\cite{HW22}\\
		    	$ \left(\frac12-\frac4p\right)_{+}$
		    	for $p\in[2,6]\cup[10,k_2(k_2+1))$\\
		    	$ \left(\frac12-\frac{k_1+k_2}{p}\right)_{+}$
		    	for $p\ge k_2(k_2+1)$ where $1\le k_1<k_2$ and $k_2\ge3$\end{tabular} & / & $ \left(\frac{1}{2}-\frac{k_{1}+k_{2}}{p}\right)_{+} $ \\\hline
		    
		    $(n,n^{1+\nu}) $ & \cellcolor{gray!30} Theorem~\ref{singlefraction} $ \begin{array}{l}\begin{cases} \frac{(r-1-\nu)(p-2)}{4p}, &  2 \le p \le 6, \\ \frac{1}{2} + \frac{r-4-\nu}{p}, &  p > 6. \end{cases}\\\text{where }r=\max\{2,1+\nu\}\end{array} $ &  /  & / & $ \left(\frac{1}{2}-\frac{2+\nu}{p}\right)_{+} $ \\\hline
		    
		    $\substack{(n^{1+\mu},n^{1+\nu})\\0<\mu<\nu} $ & \cellcolor{gray!30} Theorem~\ref{doublefraction} $ \begin{array}{l}\begin{cases} \frac{(p-2)(r_1 + r_2 - 2 - \mu - \nu)}{4p}, & 2 \le p \le 6, \\ \frac{1}{2} + \frac{r_1 + r_2 - 5 - \mu - \nu}{p}, &  p > 6. \end{cases}\\\text{where }r_1 = \max\{2, 1+\mu\},r_2 = \max\{2, 1+\nu\}\end{array} $ &  /  & / & $ \left(\frac{1}{2}-\frac{2+\mu+\nu}{p}\right)_{+} $ \\\hline
		    
		    $ (n,n^{k}+O(1)) $ & \cellcolor{gray!30} Theorem~\ref{perturbation} $ \left(\frac{1}{2}-\frac{3}{p}\right)_{+} $ & /  & / & $ \left(\frac{1}{2}-\frac{k+1}{p}\right)_{+} $ \\\hline
		\end{tabular}}\\
		Here $ (\cdot)_{+}=\max\{0,\cdot\}$, and $Poly_k$ denotes a polynomial with integer coefficients and degree $k$. Cells with \colorbox{gray!30}{gray background} indicate the best currently recorded bound within the corresponding row. 
		\label{table}
	\end{table}}
	
	The structure of this paper is as follows. Section~\ref{preliminaries} reviews the decoupling estimates used later. Section~\ref{proofsingle} proves Theorem~\ref{singlefraction}. Section~\ref{proofdouble} proves Theorem~\ref{doublefraction}. Section~\ref{proofperturbation} proves Theorem~\ref{perturbation} and gives two applications. Section~\ref{finalremarks} concludes with brief comments on the method.

\section{Preliminaries}\label{preliminaries}
	We will introduce the decoupling inequality for parabola first. We fix $\delta \in 2^{-\mathbb{N}}$, i.e. $\delta = 2^{-k}$ for some non-negative integer $k$. For an interval $I \subset [0,1]$ define the frequency region
	
	$$\Omega_I(\delta):=\left\{\left(\xi_1, \xi_1^2+\delta'\right): \xi_1 \in I,\; |\delta'| \le \delta^2\right\}.$$
	
	Let $p \in (1,\infty)$ and let $F: \mathbb{R}^2 \to \mathbb{C}$ be an $L^p$ function whose Fourier transform is supported on $\Omega_{[0,1]}(\delta)$. For each interval $I$ we denote by $F_I$ the Fourier restriction of $F$ to $\Omega_I(\delta)$, i.e.
	
	$$F_I := \mathcal{F}^{-1}\left(\widehat{F}\cdot \mathbf{1}_{\Omega_I(\delta)}\right).$$
	
	Because $1<p<\infty$, the $L^p$ boundedness of the Hilbert transform together with Fubini's theorem guarantees that each $F_I$ is again an $L^p(\mathbb{R}^2)$ function.
	
	Let $\mathcal{I}_\delta$ be the collection of all dyadic subintervals $I \subset [0,1]$ of length $\ell(I)=\delta$ (i.e. intervals of the form $[j\delta,(j+1)\delta)$ contained in $[0,1]$). The sets $\{\Omega_I(\delta)\}_{I\in\mathcal{I}_\delta}$ form a partition of $\Omega_{[0,1]}(\delta)$ (up to boundaries), and we have the decomposition
	
	$$F = \sum_{I\in\mathcal{I}_\delta} F_I.$$
	
	A Fourier decoupling inequality compares the $L^p$ norm of $F$ with the square root of the sum of squares of the $L^p$ norms of the pieces $F_I$:
	
	$$\left(\sum_{I\in\mathcal{I}_\delta} \|F_I\|_{L^p(\mathbb{R}^2)}^2\right)^{\frac12}.$$
	
	By the triangle inequality and the Cauchy--Schwarz inequality, and noting that $|\mathcal{I}_\delta| = \frac{1}\delta$, we obtain
	
	$$\|F\|_{L^p} \le \sum_{I\in\mathcal{I}_\delta} \|F_I\|_{L^p}	
	\le \left(\frac{1}{\delta}\right)^{\frac12} \left(\sum_{I\in\mathcal{I}_\delta} \|F_I\|_{L^p}^2\right)^{\frac12}.$$
	
	Define $D_p(\delta)$ to be the infimum of all constants $D$ such that
	
	$$\|F\|_{L^p(\mathbb{R}^2)} \le D \left(\sum_{I\in\mathcal{I}_\delta} \|F_I\|_{L^p(\mathbb{R}^2)}^2\right)^{\frac12}$$
	
	holds for every $F$ with $\operatorname{supp}(\widehat{F}) \subset \Omega_{[0,1]}(\delta)$. Choosing $F$ whose Fourier transform is supported on a single $\Omega_I(\delta)$ gives $D_p(\delta)\ge 1$. Together with the upper bound from the triangle inequality we have
	
	$$1 \le D_p(\delta) \le \left(\frac{1}{\delta}\right)^{\frac12}.$$
	
	An inequality of the form
	
	$$\|F\|_{L^p} \le D\left(\sum_{I\in\mathcal{I}_\delta}\|F_I\|_{L^p}^2\right)^{\frac12}$$
	
	is called a Fourier decoupling inequality. The following proposition gives bounds on $D_p(\delta)$.
	
	\begin{proposition}[Decoupling inequality for parabola, \cite{BD15}]\label{decouplingparabola}
		For every $p \in [2,\infty)$ and every $\varepsilon>0$, there exists a constant $C_{p,\varepsilon}$ depending only on $p$ and $\varepsilon$ such that for all $\delta \in 2^{-\mathbb{N}}$,
	\[
	D_p(\delta) \le
	\begin{cases}
		C_{p,\varepsilon}\,\delta^{-\varepsilon}, & 2 \le p \le 6,\\[4pt]
		C_{p,\varepsilon}\,\delta^{-\left(\frac12-\frac{3}{p}\right)-\varepsilon}, & p > 6.
	\end{cases}
	\]
	\end{proposition}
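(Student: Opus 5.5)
The plan is to prove the critical case $p=6$ (with $D_6(\delta)\lesssim_\varepsilon\delta^{-\varepsilon}$) and then get the other ranges by interpolation. For $p=2$ we have $D_2(\delta)\lesssim 1$ by Plancherel, since the caps $\Omega_I(\delta)$ are finitely overlapping. Interpolating the $\ell^2$-valued decoupling inequality between $p=2$ and $p=6$, by complex interpolation of the map $(F_I)_I\mapsto\sum_I F_I$ with the $\ell^2(L^p)$ norm on the input side, gives $D_p(\delta)\lesssim_\varepsilon\delta^{-\varepsilon}$ for $2\le p\le 6$. For $p>6$ we interpolate the $p=6$ bound with the trivial $L^\infty$ bound $D_\infty(\delta)\le\delta^{-1/2}$ from the triangle inequality and Cauchy--Schwarz. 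This gives $D_p\lesssim\delta^{-\varepsilon}\,\delta^{-\frac12(1-\frac6p)}=\delta^{-(\frac12-\frac3p)-\varepsilon}$, which is exactly the claimed exponent.

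For $p=6$ I would follow the Bourgain--Demeter induction on scales, in the streamlined bilinear form. First, \emph{parabolic rescaling}: if $J$ is an interval of length $\sigma$, the affine map sending $\Omega_J(\delta)$ to $\Omega_{[0,1]}(\delta/\sigma)$ preserves the parabola. Hence $\|F_J\|_6\le D_6(\delta/\sigma)\bigl(\sum_{I\subset J}\|F_I\|_6^2\bigr)^{1/2}$, and in particular $D_6(\delta)\le D_6(\sigma)D_6(\delta/\sigma)$. Second, a \emph{bilinear reduction} (Bourgain--Guth): splitting $[0,1]$ into intervals of length $K^{-1}$, one bounds
\[
\|F\|_6\le C\,D_6(\delta K)+C K^{O(1)}\max_{\mathrm{dist}(J_1,J_2)\ge K^{-1}}\bigl\||F_{J_1}F_{J_2}|^{1/2}\bigr\|_6 .
\]
This reduces matters to a bilinear decoupling constant $D_{\mathrm{bil}}(\delta)$ for transverse pieces. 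Third, the \emph{bilinear iteration}. On a ball of radius $\delta^{-2\cdot 2^{-m}}$ we compare the bilinear $L^6$ quantity at frequency scale $\delta^{2^{-m}}$ with the same quantity at scale $\delta^{2^{-m-1}}$ using three ingredients:
\begin{itemize}
\item[(a)] $L^2$ orthogonality on balls (local Plancherel), which passes from scale $\sigma$ to scale $\sigma^{1/2}$ at the $L^2$ level;
\item[(b)] bilinear Kakeya, which in the plane is elementary because wave packets from transverse caps lie in transverse tubes meeting in a parallelogram of controlled area;
\item[(c)] H\"older, interpolating between $L^2$ and the $L^6$ quantity so that part of the norm is measured at the finer scale. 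The unconverted part is bounded by $D_6$ at a coarser scale via rescaling.
\end{itemize}

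Iterating $m$ times yields an inequality of the schematic form
\[
D_6(\delta)\lesssim_{K,m}\ \delta^{-c\,2^{-m}}\prod_{j}D_6\bigl(\delta^{1-2^{-j}}\bigr)^{w_j},
\]
with weights $w_j$ summing to less than $1$ by a fixed margin. I would then run a bootstrap. Let $\eta$ be the infimum of exponents with $D_6(\delta)\lesssim\delta^{-\eta}$; a priori $\eta\le1/2$. Inserting $D_6(\delta')\lesssim\delta'^{-\eta-\varepsilon}$ into the right-hand side gives $\eta\le\eta\,\theta(m)+c2^{-m}$ with $\theta(m)<1$, and choosing $m$ large forces $\eta=0$. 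The remaining point is to handle the $K^{O(1)}$ constants by taking $K$ fixed but large, depending on $\varepsilon$.

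The main obstacle is step three: keeping precise track of exponents through the iteration, so that the coefficient of $\eta$ is strictly less than $1$ while the losses from the bilinear reduction and the Kakeya step stay $O(\delta^{-\varepsilon})$. Getting the H\"older interpolation exponents right at $p=6$, the critical exponent where the $L^2$ and $L^6$ numerology exactly balances, is the delicate part. I would first verify the exponent bookkeeping in the one-step inequality before setting up the full multi-scale induction.
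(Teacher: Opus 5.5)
Your proposal follows the paper's route: the paper likewise obtains the full range by interpolating $D_2(\delta)=1$, $D_6(\delta)\lesssim_\varepsilon\delta^{-\varepsilon}$ and $D_\infty(\delta)\le\delta^{-1/2}$, and it cites Bourgain--Demeter (via Demeter's book) for the $p=6$ input that you outline with the bilinear induction on scales. One correction to that outline: at the critical exponent the weights $w_j$ are not bounded away from summing to $1$ (in the standard iteration they are $2^{-1},\dots,2^{-m}$, summing to $1-2^{-m}$), so $1-\theta(m)$ is only of order $m2^{-m}$, and the bootstrap closes, giving $\eta\lesssim c/m$, only because the factors are $D_6$ at the strictly coarser scales $\delta^{1-2^{-j}}$.
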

	\begin{proof}[Proof ideas (see \cite{Demeter20} for the complete proof)]
		We can use Plancherel's Theorem to prove $ D_{2}(\delta)=1 $. And we already have $ D_{\infty}(\delta)\le\delta^{-\frac{1}{2}} $. So we can finish the proof by Riesz-Thorin interpolation theorem, after we get $ D_{6}(\delta)\lesssim_{\varepsilon}\delta^{-\varepsilon} $.
	\end{proof}
	
	To generalize this decoupling inequality from the standard parabola to a more general curve, we introduce a modified frequency region associated with a function $\varphi$. Let $\varphi \in C^{2,\alpha}[0,1]$ satisfy $\varphi'' > 0$. For an interval $I \subset [0,1]$, we define the generalized frequency region as
	
	$$\Omega^\varphi_I(\delta) := \left\{\left(\xi_1, \varphi(\xi_1)+\delta'\right): \xi_1 \in I,\; |\delta'| \le \delta^2\right\}.$$
	
	Similar to the parabola case, let $F$ be an $L^p$ function whose Fourier transform is supported on $\Omega^\varphi_{[0,1]}(\delta)$, and let $F_I$ denote its Fourier restriction to $\Omega^\varphi_I(\delta)$.
	
	In order to state the localized decoupling inequality, we must introduce a spatial weight function. Given the unit ball $B=B(0,1)$ in $\mathbb{R}^2$, we denote by $\omega_B(x) = (1+\|x\|)^{-200}$ the standard weight function. For any spatial cube $Q$ centered at $x_0$ with side length $L$, the adapted weight is defined as $\omega_Q(x) = \omega_B(L^{-1}(x-x_0))$.
	
	Using our notation where $\delta$ represents the interval length in the partition $\mathcal{I}_\delta$ (which corresponds to replacing the thickness parameter $\delta$ in the standard literature with $\delta^2$), the general result from \cite{BD15} can be restated as follows:
	
	\begin{proposition}[Decoupling inequality for parabola-like curve, \cite{BD15}]\label{decouplingparabolalike}
		Let $\varphi \in C^{2,\alpha}[0,1]$ satisfy $\varphi'' > 0$. Then for every $p \in [2,\infty)$ and every $\varepsilon > 0$, there exists a constant $C_{\varphi,p,\varepsilon}$ such that
		\[
		\|F\|_{L^p(\omega_{Q_{\delta^{-2}}})} \le 
		\begin{cases}
			C_{\varphi,p,\varepsilon}\,\delta^{-\varepsilon} \left( \sum_{I\in\mathcal{I}_\delta} \|F_I\|_{L^p(\omega_{Q_{\delta^{-2}}})}^2 \right)^{\frac12}, & 2 \le p \le 6,\\
			C_{\varphi,p,\varepsilon}\,\delta^{-\left(\frac12-\frac{3}{p}\right)-\varepsilon} \left( \sum_{I\in\mathcal{I}_\delta} \|F_I\|_{L^p(\omega_{Q_{\delta^{-2}}})}^2 \right)^{\frac12}, & p > 6
		\end{cases}
		\]
		for all functions $F$ with $\operatorname{supp}(\widehat{F}) \subset \Omega^\varphi_{[0,1]}(\delta)$, where $Q_{\delta^{-2}}$ is a spatial cube of side length $\delta^{-2}$.
	\end{proposition}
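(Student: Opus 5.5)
Since Proposition~\ref{decouplingparabolalike} is the general Bourgain--Demeter theorem restated in our normalization, I would derive it from Proposition~\ref{decouplingparabola}. The plan has three parts: first localize the global estimate to the weight $\omega_{Q_{\delta^{-2}}}$, then approximate $\varphi$ by parabolas on short intervals, and finally iterate across scales in the style of Pramanik--Seeger.

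\emph{Localization.} The Fourier support of $F$ lies in a $\delta^2$-neighbourhood of a curve. By the uncertainty principle, $|F|$ and each $|F_I|$ are essentially constant at spatial scale $\delta^{-2}$. So I would multiply $F$ by a Schwartz function $\eta_Q$ adapted to $Q=Q_{\delta^{-2}}$ whose Fourier transform is supported in $B(0,\delta^2)$. This only thickens the frequency support by a harmless factor of $2$. I would then apply the global inequality to $\eta_Q F$ and use $|\eta_Q|\lesssim \omega_Q$. The change of thickness is absorbed by splitting into $O(1)$ subfamilies and using the triangle inequality. Hence it suffices to prove the global ($\mathbb{R}^2$) inequality for the neighbourhood $\Omega^\varphi(\delta)$, with the same constants up to $C_\varphi$.

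\emph{Parabolic approximation.} Fix an interval $J=[a,a+\sigma]$ and write $\varphi(\xi)=T_a(\xi)+O(\sigma^{2+\alpha})$ on $J$, where $T_a$ is the quadratic Taylor polynomial. The affine map
\[
(\xi_1,\xi_2)\mapsto\Bigl(\tfrac{\xi_1-a}{\sigma},\ \tfrac{\xi_2-\varphi(a)-\varphi'(a)(\xi_1-a)}{\tfrac12\varphi''(a)\sigma^2}\Bigr)
\]
sends the graph of $T_a$ over $J$ to the standard parabola over $[0,1]$. Since $c\le\varphi''\le C$, this map sends a $\rho^2$-neighbourhood of $\varphi|_J$ into a $\lesssim(\rho^2+\sigma^{2+\alpha})/\sigma^2$-neighbourhood of that parabola. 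Whenever $\sigma^{2+\alpha}\le\rho^2$, Proposition~\ref{decouplingparabola} (invariant under affine changes of variables in $L^p$) decouples the $\rho$-neighbourhood of $\varphi|_J$ into pieces of length $\rho$. The cost is $C_{p,\varepsilon}(\rho/\sigma)^{-\varepsilon}$ for $p\le6$, and $(\rho/\sigma)^{-(\frac12-\frac3p)-\varepsilon}$ for $p>6$.

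\emph{Iteration, and the main obstacle.} The difficulty is that the approximation only works when $\sigma^{2+\alpha}\le\delta^2$, i.e.\ $\sigma\le\delta^{2/(2+\alpha)}$, which is much larger than $\delta$ but not $1$. I would therefore induct on scales. Define $\sigma_0=1$ and $\sigma_{j+1}=\sigma_j^{(2+\alpha)/2}$, stopping once $\sigma_j\le\delta$. Also use the trivial fact that at the top scale $[0,1]$ splits into $O_\varphi(1)$ intervals on which the approximation can start.

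At step $j$, I would work with the $\sigma_{j+1}$-neighbourhood, which contains the $\delta$-neighbourhood. I would decouple each $\sigma_j$-arc into $\sigma_{j+1}$-arcs using the parabolic approximation above. The number of steps is $O(\log\log(1/\delta)/\log(1+\alpha/2))$, so the accumulated constant $C_{p,\varepsilon}^{\#\text{steps}}$ is $\lesssim_\varepsilon\delta^{-\varepsilon}$. For $p\le6$ the $\varepsilon$-losses multiply to $\delta^{-O(\varepsilon)}$. For $p>6$ the losses telescope, $\prod_j(\sigma_{j+1}/\sigma_j)^{-(\frac12-\frac3p)}=\delta^{-(\frac12-\frac3p)}$. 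The delicate point I expect to be the main obstacle is making the neighbourhood bookkeeping consistent across steps: each intermediate decoupling must be applied to functions whose support is a $\delta^2$-thin set, viewed inside the coarser $\sigma_{j+1}^2$-neighbourhoods. I would handle this by always applying Proposition~\ref{decouplingparabola} at thickness $\max(\delta,\sigma_{j+1})^2$ and noting that the pieces $F_I$ are unchanged by enlarging the ambient neighbourhood.
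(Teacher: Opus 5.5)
The paper gives no argument of its own here; it simply restates \cite{BD15}. Your proposal is correct, and it is the standard Pramanik--Seeger reduction behind that citation: globalize, replace $\varphi$ on arcs of length $\sigma$ by its quadratic Taylor polynomial with $O(\sigma^{2+\alpha})$ error, rescale to Proposition~\ref{decouplingparabola}, and iterate over the $O(\log\log\delta^{-1})$ dyadically rounded scales $\sigma_{j+1}\approx\sigma_j^{1+\alpha/2}$. The one step to tighten is the localization: a Schwartz $\eta_Q$ with $|\eta_Q|^p\lesssim\omega_Q$ only controls the right-hand side, so to recover $\|F\|_{L^p(\omega_Q)}$ on the left you should bound $\|F\|_{L^p(Q')}$ on each translate $Q'$ and resum with the weights $\omega_Q(c_{Q'})$, using Minkowski in $\ell^{p/2}$.
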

	
	To extend these concepts to curves with vanishing curvature, we consider the curve $\gamma(t) = (t, t^{1+\nu})$ for a fixed $\nu > 0$. We introduce the critical parameter $r := \max\{1+\nu, 2\}$.
	
	Similar to the parabola case, for an interval $I \subset [0,1]$, we define the new frequency region associated with this curve as
	
	$$\Omega^{(\nu)}_I(\delta) := \left\{\left(\xi_1, \xi_1^{1+\nu}+\delta^{\prime}\right): \xi_1 \in I,\; \left|\delta^{\prime}\right| \leq \delta^r \right\}.$$
	
	Let $F: \mathbb{R}^2 \rightarrow \mathbb{C}$ be an $L^p$ function whose Fourier transform is supported on $\Omega^{(\nu)}_{[0,1]}(\delta)$. For each interval $I \in \mathcal{I}_\delta$, we define the Fourier restriction of $F$ to this new region by
	
	$$F_I := \mathcal{F}^{-1}\left(\widehat{F} \cdot \mathbf{1}_{\Omega^{(\nu)}_I(\delta)}\right).$$
	
	This gives the decomposition $F = \sum_{I \in \mathcal{I}_\delta} F_I$.
	
	For a rectangle $R$ in $\mathbb R^2$ with sides parallel to the coordinate axes, let $\omega_R$ denote the standard weight adapted to $R$. Since our parameter $\delta$ denotes the interval length in $\mathcal I_\delta$, the local rectangle corresponding to \cite{BGLSX20} has side lengths
	\[
	\delta^{-2}\times \delta^{-r}.
	\]
	We denote such a rectangle by $R_{\delta,r}$. We then define $D^{(\nu)}_p(\delta)$ to be the infimum of all constants $D$ such that
	
	$$\|F\|_{L^p(\omega_{R_{\delta,r}})} \leq D\left(\sum_{I \in \mathcal{I}_\delta}\left\|F_I\right\|_{L^p(\omega_{R_{\delta,r}})}^2\right)^{\frac12}$$
	
	holds for all $F$ with $\operatorname{supp}(\widehat{F}) \subset \Omega^{(\nu)}_{[0,1]}(\delta)$ and for all translates of $R_{\delta,r}$. The following proposition bounds the local decoupling constant $D^{(\nu)}_p(\delta)$ for this class of curves.
	
	\begin{proposition}[Decoupling inequality for $(t, t^{1+\nu})$, \cite{BGLSX20}]\label{decouplingmodel}
		
		For every fixed $\nu > 0$, let $r = \max\{1+\nu, 2\}$. For every $p \in [2, 6]$ and every $\varepsilon>0$, there exists a constant $C_{\nu,p, \varepsilon}$ depending on $\nu$, $p$, and $\varepsilon$ such that for all $\delta \in 2^{-\mathbb{N}}$,
		
		$$D^{(\nu)}_p(\delta) \le C_{\nu,p,\varepsilon}\,\delta^{-\varepsilon}.$$
		
	\end{proposition}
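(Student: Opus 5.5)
The plan is to reduce to the parabola-like decoupling of Proposition~\ref{decouplingparabolalike} by a dyadic decomposition of $[0,1]$ according to the size of the curvature of $\gamma(t)=(t,t^{1+\nu})$. Curvature degenerates only at $t=0$ when $\nu>1$, and blows up there when $\nu<1$.

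First I write $[0,1]=[0,\delta)\cup\bigcup_{j}J_j$ with $J_j=[2^{-j},2^{-j+1})$ for $1\le j\le \log_2(1/\delta)$, and split $F=F_{[0,\delta)}+\sum_j F_{J_j}$ accordingly, where $F_{J_j}=\sum_{I\in\mathcal I_\delta,\,I\subset J_j}F_I$. There are only $O(\log(1/\delta))$ pieces, so the triangle inequality and Cauchy--Schwarz give
\[
\|F\|_{L^p(\omega_{R_{\delta,r}})}\lesssim (\log\tfrac1\delta)^{1/2}\Bigl(\|F_{[0,\delta)}\|^2+\sum_j\|F_{J_j}\|_{L^p(\omega_{R_{\delta,r}})}^2\Bigr)^{1/2}.
\]
The logarithmic loss is absorbed into $\delta^{-\varepsilon}$. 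The piece over $[0,\delta)$ is a single element of $\mathcal I_\delta$ and needs no further decoupling. It then suffices to show, uniformly in $j$,
\[
\|F_{J_j}\|_{L^p(\omega_{R_{\delta,r}})}\lesssim_\varepsilon \delta^{-\varepsilon}\Bigl(\sum_{I\subset J_j}\|F_I\|_{L^p(\omega_{R_{\delta,r}})}^2\Bigr)^{1/2}.
\]

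For fixed $j$ I rescale by $t=2^{-j}s$, $s\in[1,2)$. The linear map $(\xi_1,\xi_2)\mapsto(2^{j}\xi_1,2^{j(1+\nu)}\xi_2)$ sends the arc over $J_j$ to the curve $(s,s^{1+\nu})$, $s\in[1,2)$. Here $\varphi(s)=s^{1+\nu}$ is smooth with $\varphi''\sim_\nu 1$, so the constants in Proposition~\ref{decouplingparabolalike} depend only on $\nu$ after an affine reparametrization of $[1,2]$ to $[0,1]$. The intervals $I\subset J_j$ become intervals of length $\sigma:=2^j\delta\le1$. The vertical thickness $\delta^r$ becomes $\delta^r2^{j(1+\nu)}$. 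The key check is that this is at most $\sigma^2=\delta^22^{2j}$:
\begin{itemize}
\item if $\nu\ge1$, so $r=1+\nu$, the thickness is $\sigma^{1+\nu}\le\sigma^2$;
\item if $\nu<1$, so $r=2$, the thickness is $\sigma^2\,2^{j(\nu-1)}\le\sigma^2$.
\end{itemize}
Hence the rescaled Fourier support lies in $\Omega^{\varphi}(\sigma)$. Proposition~\ref{decouplingparabolalike} at scale $\sigma$, with $p\in[2,6]$, then decouples into the $\sigma$-intervals with loss $\sigma^{-\varepsilon}\le\delta^{-\varepsilon}$. Undoing the rescaling returns exactly the pieces $F_I$, $I\subset J_j$.

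The main obstacle is matching the spatial localization. On the physical side the rescaling maps $R_{\delta,r}$, of size $\delta^{-2}\times\delta^{-r}$, to a rectangle of size $2^{-j}\delta^{-2}\times 2^{-j(1+\nu)}\delta^{-r}$. Proposition~\ref{decouplingparabolalike} instead wants squares of side $\sigma^{-2}=2^{-2j}\delta^{-2}$. In both cases $\nu\ge1$ and $\nu<1$, both sides of the rescaled rectangle are at least $\sigma^{-2}$. So I would cover the rescaled rectangle by a finitely overlapping family of $\sigma^{-2}$-squares $Q$ and apply the local decoupling on each $\omega_Q$. I would then sum in $\ell^{p}$ over $Q$, using Minkowski's inequality in $\ell^{p/2}$ (valid since $p\ge2$) to interchange the $\ell^2$ sum over $I$ with the $\ell^p$ sum over $Q$. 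Finally I would use $\sum_Q\omega_Q\lesssim\omega_{R}$ for the rescaled rectangle $R$ to recover the weight $\omega_{R_{\delta,r}}$ after undoing the change of variables. The weights are not exactly compatible, because the decay profiles of $\omega_Q$ and $\omega_R$ differ anisotropically. This is the step I expect to need the most care, and I would handle it by the standard fact that decoupling with respect to $\omega_Q$ for all translates $Q$ implies decoupling with the same constant on any larger rectangle's weight, up to $O(1)$.
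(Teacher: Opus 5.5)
Your proof is correct and follows the same scheme the paper uses for its anisotropic analogue, Proposition~\ref{decouplingdouble} (for this statement itself the paper simply cites \cite{BGLSX20}): dyadic blocks away from $t=0$, rescaling each block to the uniformly convex arc $(s,s^{1+\nu})$, $s\in[1,2]$, at scale $\sigma=2^j\delta$, the same two-case check that the thickness is at most $\sigma^2$ and that the local squares fit inside the rescaled $R_{\delta,r}$, and covering plus Minkowski in $\ell^{p/2}$; your single-cap core $[0,\delta)$ is slightly cleaner than the paper's $(0,\delta^{1-\eta}]$. On the weight step you flagged, note that with equal decay exponents the pointwise bound $\sum_Q\omega_{\widetilde R}(c_Q)\,\omega_Q\lesssim\omega_{\widetilde R}$ (with $c_Q$ the center of $Q$) can fail for very eccentric $\widetilde R$, so apply the local estimate with a right-hand weight of decay exponent at least $203$ (obtainable from the global form of Proposition~\ref{decouplingparabolalike} by multiplying by a Schwartz cutoff adapted to $Q$), after which the summation closes.
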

	
	By covering $\mathbb R^2$ with translates of $R_{\delta,r}$ and summing the local inequalities, one also obtains the corresponding global form
	
	$$\|F\|_{L^p(\mathbb R^2)}
	\lesssim_{\nu,p,\varepsilon}
	\delta^{-\varepsilon}
	\left(\sum_{I\in\mathcal I_\delta}
	\|F_I\|_{L^p(\mathbb R^2)}^2
	\right)^{\frac12}.$$
	
	To generalize this result to a broader class of curves, let $S$ be a regular curve in $\mathbb{R}^2$ parameterized by $\gamma(t) = (\varphi_1(t), \varphi_2(t))$ for $t \in [0,1]$, where $\varphi_1, \varphi_2 \in C^\infty([0,1])$. We assume that the Wronskian of $(\varphi_1^\prime, \varphi_2^\prime)$ vanishes only at finitely many points and to finite order. For such curves, we define the critical parameter $r$ such that $r-2$ is the maximum order of vanishing of the Wronskian.
	
	Similar to the model case, for an interval $I \subset [0,1]$, we define the frequency region associated with this general curve at parameter scale $\delta$. Since $\delta$ denotes the length of the parameter interval, the finite-type scale corresponding to order $r$ is a frequency thickness of size $\delta^r$ and a dual physical scale of size $\delta^{-r}$. We therefore use the coordinate neighborhood
	
	$$\Omega^S_I(\delta) := \left\{\xi \in \mathbb{R}^2 : \exists t \in I \text{ such that } |\xi_1-\varphi_1(t)|\le \delta^r,\ |\xi_2-\varphi_2(t)|\le \delta^r \right\}.$$
	
	For $r=2$, this recovers the usual $\delta^2$-neighborhood used for non-degenerate curves.
	
	Let $F: \mathbb{R}^2 \rightarrow \mathbb{C}$ be an $L^p$ function whose Fourier transform is supported on $\Omega^S_{[0,1]}(\delta)$. For each interval $I \in \mathcal{I}_\delta$, we define the Fourier restriction of $F$ to this new region by
	
	$$F_I := \mathcal{F}^{-1}\left(\widehat{F} \cdot \mathbf{1}_{\Omega^S_I(\delta)}\right).$$
	
	This gives the decomposition $F = \sum_{I \in \mathcal{I}_\delta} F_I$. Let $B_{\delta,r}$ denote a spatial ball of radius $\delta^{-r}$. We then define $D^S_p(\delta)$ to be the infimum of all constants $D$ such that
	
	$$\|F\|_{L^p(\omega_{B_{\delta,r}})} \leq D\left(\sum_{I \in \mathcal{I}_\delta}\left\|F_I\right\|_{L^p\left(\omega_{B_{\delta,r}}\right)}^2\right)^{\frac12}$$
	
	holds for all $F$ with $\operatorname{supp}(\widehat{F}) \subset \Omega^S_{[0,1]}(\delta)$ and for all translates of $B_{\delta,r}$. The following proposition bounds the local weighted decoupling constant $D^S_p(\delta)$ for this general class of curves.
	
	\begin{proposition}[Decoupling inequality for general regular curves, \cite{BGLSX20}]\label{decouplingregular}
		
		Let $S$ be a regular curve as described above, and let $r-2$ be the maximum order of vanishing of the Wronskian of $(\varphi_1^\prime, \varphi_2^\prime)$. For every $p \in [2, 6]$ and every $\varepsilon>0$, there exists a constant $C_{S,p, \varepsilon}$ depending on the curve, $p$, and $\varepsilon$ such that for all $\delta \in 2^{-\mathbb{N}}$,
		
		$$D^S_p(\delta) \le C_{S,p,\varepsilon}\,\delta^{-\varepsilon}.$$
		
	\end{proposition}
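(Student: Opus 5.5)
The plan is to reduce Proposition~\ref{decouplingregular} to the non-degenerate case, Proposition~\ref{decouplingparabolalike}. Away from the zeros of the Wronskian we use that result directly. Near each zero we use a dyadic decomposition in the distance to the zero, followed by an anisotropic rescaling that turns each dyadic piece into a uniformly curved arc. Throughout we use two standard facts. First, decoupling constants are invariant under non-degenerate affine changes of variables in frequency, with the dual change in physical space. Second, a decoupling inequality on balls of radius $R$ with weights $\omega_B$ implies the same inequality, with the same constant, on any larger ball or rectangle. This follows by covering with smaller balls and using $\sum_{B'}\omega_{B'}\lesssim \omega_{B}$ together with Minkowski's inequality, since $p\ge 2$.

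\textbf{Step 1 (finite partition).} Let $t_1,\dots,t_m$ be the zeros of the Wronskian $W=\varphi_1'\varphi_2''-\varphi_2'\varphi_1''$, with orders $r_i-2\le r-2$. Split $[0,1]$ into a bounded number of intervals $J$. Each $J$ either stays at distance $\ge c_S$ from all $t_i$, or contains exactly one $t_i$ together with a fixed neighbourhood of it. Since there are $O_S(1)$ such $J$, the triangle inequality and Cauchy--Schwarz lose only $O_S(1)$ in passing from $F$ to the pieces $F_J$. On each $J$ the curve is regular, so after a rotation it is a graph $\xi_2=\phi(\xi_1)$ with $\phi\in C^\infty$. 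The coordinate $\delta^r$-neighbourhood $\Omega^S_I(\delta)$ is then contained in a vertical $C\delta^r$-neighbourhood of the graph over a slightly enlarged interval. On the non-degenerate pieces $|\phi''|\ge c>0$, and $\delta^r\le\delta^2$, so Proposition~\ref{decouplingparabolalike} applies directly. It gives constant $\delta^{-\varepsilon}$ on $\delta^{-2}$-cubes, and hence on balls $B_{\delta,r}$ of radius $\delta^{-r}\ge\delta^{-2}$.

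\textbf{Step 2 (dyadic decomposition near a zero).} Fix a piece containing $t_0$ and translate so that $t_0=0$. Subtract the tangent line at $0$, which is an affine map, and use the vanishing order of $W$ to write
\[
\phi(s)=a\,s^{q}\bigl(1+O(s)\bigr), \qquad a\ne 0, \qquad 2\le q\le r,
\]
with the corresponding symbol bounds on derivatives. Decompose the parameter set into $\{|s|\lesssim\delta\}$ and the dyadic shells $A_j=\{|s|\sim 2^{-j}\}$ for $1\le 2^{j}\lesssim\delta^{-1}$. There are $O(\log\delta^{-1})$ shells, so by Cauchy--Schwarz this coarse decoupling loses $(\log\delta^{-1})^{1/2}\lesssim_\varepsilon\delta^{-\varepsilon}$. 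The part $\{|s|\lesssim\delta\}$ consists of $O(1)$ intervals of $\mathcal I_\delta$ and needs no further work.

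\textbf{Step 3 (rescaling each shell).} On $A_j$ apply the linear map
\[
(\xi_1,\xi_2)\mapsto(2^{j}\xi_1,\;2^{jq}\xi_2).
\]
The arc becomes the graph of $\phi_j(\eta)=2^{jq}\phi(2^{-j}\eta)=a\eta^q(1+O(2^{-j}\eta))$ over $|\eta|\sim1$. This family has $|\phi_j''|\ge c$ and $\|\phi_j\|_{C^{3}}\le C$ uniformly in $j$. Intervals of length $\delta$ become intervals of length $\sigma:=2^{j}\delta\le1$. The $\delta^r$-neighbourhood becomes contained in a $C\,2^{jq}\delta^{r}$-neighbourhood, and
\[
2^{jq}\delta^{r}\le (2^{j}\delta)^{q}\le\sigma^{2}.
\]
Therefore Proposition~\ref{decouplingparabolalike} at scale $\sigma$ applies and gives constant $C\sigma^{-\varepsilon}\le C\delta^{-\varepsilon}$. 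The physical localization to $B_{\delta,r}$ maps to a rectangle whose sides are at least $\sigma^{-2}$, so the fact about larger balls and rectangles transfers the local estimate. Undoing the scaling, whose Jacobians cancel on both sides, and combining with Steps 1--2 gives $D^S_p(\delta)\lesssim_{S,p,\varepsilon}\delta^{-3\varepsilon}$, which suffices.

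\textbf{Main obstacle.} I expect the difficulty to be in Step 3: making sure the constant in Proposition~\ref{decouplingparabolalike} is uniform over the whole family $\{\phi_j\}$. That proposition is stated for one fixed $\varphi$, so I would first check that its constant depends only on $\inf\varphi''$ and a $C^{2,\alpha}$ bound, which is true for the Bourgain--Demeter induction. The uniform bounds on $\phi_j$ come from the expansion $\phi(s)=as^q(1+O(s))$ together with $|s|\sim2^{-j}$. A secondary point needing care is the comparison between the coordinate neighbourhoods $\Omega^S_I(\delta)$ and vertical neighbourhoods after rotation and rescaling. Under the anisotropic map the horizontal thickness $\delta^r$ becomes $2^{j}\delta^r$. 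Through the slope $\phi_j'=O(1)$ this contributes only $O(2^{j}\delta^r)\le\sigma^2$ in the vertical direction, so the containment survives.
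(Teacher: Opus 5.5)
Your argument is correct. The paper itself gives no proof of this proposition; it is simply quoted from \cite{BGLSX20}.

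The closest argument in the paper is its proof of Proposition~\ref{decouplingdouble}, and your proof uses the same mechanism:
\begin{itemize}
\item A small core around the degenerate point is handled trivially. You use $O(1)$ intervals of length $\delta$; the paper uses the larger core $(0,\delta^{1-\eta}]$, which is not actually needed.
\item Dyadic shells around the degenerate point are summed with a logarithmic loss.
\item Each shell is rescaled anisotropically to a uniformly curved arc.
\item The thickness is checked against the new scale: your $2^{jq}\delta^r\le(2^j\delta)^q\le\sigma^2$ is the analogue of the paper's $\delta^{r_1}a^{-(1+\mu)}\le\delta^2a^{-2}$.
\item The local decoupling regions are checked to fit inside the given physical region, followed by covering and Minkowski. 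Your statement that the rescaled $B_{\delta,r}$ has sides $\gtrsim\sigma^{-2}$ is equivalent to the paper's $L_i(a)\le\delta^{-r_i}$.
\end{itemize}

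The one genuinely extra ingredient is the uniformity you flag as the main obstacle. For $(t^{1+\mu},t^{1+\nu})$, every dyadic block rescales to literally the same curve $(s^{1+\mu},s^{1+\nu})$ on $[1,2]$, so one fixed Bourgain--Demeter constant suffices. For a general finite-type curve, the rescaled arcs $\phi_j$ change with $j$. You then need the standard fact that the constant in Proposition~\ref{decouplingparabolalike} depends only on a lower bound for $|\phi_j''|$ and an upper bound for $\|\phi_j\|_{C^3}$.

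That fact, together with your rotation to graph form and the expansion $\phi(s)=as^q(1+O(s))$ with $q\le r$, makes the cited result a self-contained consequence of Bourgain--Demeter decoupling.
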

	
	Again, by covering $\mathbb R^2$ with translates of $B_{\delta,r}$ and summing, this local weighted estimate implies the global form
	
	$$\|F\|_{L^p(\mathbb R^2)}
	\lesssim_{S,p,\varepsilon}
	\delta^{-\varepsilon}
	\left(\sum_{I\in\mathcal I_\delta}
	\|F_I\|_{L^p(\mathbb R^2)}^2
	\right)^{\frac12}.$$
	
	\section{Proof of Theorem~\ref{singlefraction}}\label{proofsingle}
	We will use Proposition~\ref{decouplingmodel} to prove Theorem~\ref{singlefraction}.
	
	\begin{proof}[Proof of Theorem~\ref{singlefraction}]
		Let
		\[
		S_N(x,t):=\sum_{n=1}^N a_n e^{2\pi i(nx+n^{1+\nu}t)}
		\]
		and
		\[
		S_N^\circ(x,t):=\sum_{n=1}^{N-1} a_n e^{2\pi i(nx+n^{1+\nu}t)}.
		\]
		The endpoint term $a_Ne^{2\pi i(Nx+N^{1+\nu}t)}$ has $L^p([0,1]^2)$ norm equal to $|a_N|$, which is bounded by $\|a\|_{\ell^2}$. Since the endpoint exponent in the theorem is non-negative, it suffices to prove the desired estimate for $S_N^\circ$.
		
		By the Cauchy-Schwarz inequality,
		\[
		\|S_N\|_{L^\infty([0,1]^2)}
		\le N^{\frac12}\left(\sum_{n=1}^N |a_n|^2\right)^{\frac12}.
		\]
		Moreover, since the spatial frequencies $n$ are distinct integers, orthogonality in the $x$ variable gives
		\[
		\|S_N\|_{L^2([0,1]^2)}
		=
		\left(\sum_{n=1}^N |a_n|^2\right)^{\frac12}.
		\]
		Therefore it suffices to prove the $L^6$ endpoint estimate
		\[
		\|S_N^\circ\|_{L^6([0,1]^2)}
		\lesssim_{\nu,\varepsilon}
		N^{\frac{r-1-\nu}{6}+\varepsilon}
		\left(\sum_{n=1}^N |a_n|^2\right)^{\frac12},
		\qquad r=\max\{2,1+\nu\}.
		\]
		The full statement then follows by interpolation with the $L^2$ and $L^\infty$ estimates above.
		
		Set $\delta=N^{-1}$. If $\delta$ is not dyadic, we replace it by a comparable dyadic number; this changes only the implicit constants. Choose a smooth compactly supported cutoff $\chi_0:\mathbb R^2\to\mathbb C$ whose support is contained in a sufficiently small ball around the origin, and whose inverse Fourier transform $\check{\chi}_0$ is real and satisfies
		\[
		|\check{\chi}_0(y_1,y_2)|\ge c_0>0,
		\qquad (y_1,y_2)\in[0,1]^2.
		\]
		Define $F:\mathbb R^2\to\mathbb C$ by prescribing its Fourier transform:
		\[
		\widehat F(\xi_1,\xi_2)
		=
		\sum_{n=1}^{N-1} a_n
		\chi_0\left(
		N^r\left(\xi_1-\frac nN\right),
		N^r\left(\xi_2-\left(\frac nN\right)^{1+\nu}\right)
		\right).
		\]
		Since $N^{-r}\le N^{-1}=\delta$, and since the vertical error produced by the horizontal thickness is also $O(N^{-r})$, the support of $\widehat F$ is contained in a fixed enlargement of $\Omega^{(\nu)}_{[0,1]}(\delta)$. This fixed enlargement affects only the implicit constants. Let $F_n$ denote the corresponding summand.
		
		Let $R_{N^{-1},r}$ be a rectangle with side lengths
		\[
		N^2\times N^r
		\]
		whose center is the midpoint of
		\[
		D_\nu=[0,N^2]\times[0,N^{1+\nu}].
		\]
		Applying the local weighted form of Proposition~\ref{decouplingmodel} at $p=6$ gives
		\[
		\|F\|_{L^6(\omega_{R_{N^{-1},r}})}
		\lesssim_{\nu,\varepsilon}
		N^\varepsilon
		\left(\sum_{n=1}^{N-1} \|F_n\|_{L^6(\omega_{R_{N^{-1},r}})}^2\right)^{\frac12}.
		\]
		By the inverse Fourier transform,
		\[
		F_n(y_1,y_2)
		=
		a_n N^{-2r}
		e^{2\pi i\left(y_1\frac nN+y_2\left(\frac nN\right)^{1+\nu}\right)}
		\check{\chi}_0\left(\frac{y_1}{N^r},\frac{y_2}{N^r}\right).
		\]
		Since the weight is adapted to a rectangle of side lengths $N^2\times N^r$, we have
		\[
		\|F_n\|_{L^6(\omega_{R_{N^{-1},r}})}
		\lesssim
		|a_n|N^{-2r}(N^2N^r)^{\frac16}
		=
		|a_n|N^{-2r+\frac{r+2}{6}}.
		\]
		Consequently,
		\[
		\|F\|_{L^6(\omega_{R_{N^{-1},r}})}
		\lesssim_{\nu,\varepsilon}
		N^{-2r+\frac{r+2}{6}+\varepsilon}
		\left(\sum_{n=1}^{N-1} |a_n|^2\right)^{\frac12}.
		\]
		
		It remains to extract the desired discrete sum from $F$. Since $r\ge2$ and $r\ge1+\nu$, we have
		\[
		\frac{y_1}{N^r}\in[0,1],
		\qquad
		\frac{y_2}{N^r}\in[0,1]
		\]
		throughout $D_\nu=[0,N^2]\times[0,N^{1+\nu}]$. Therefore the cutoff factor is bounded below on $D_\nu$. Moreover, by the choice of $R_{N^{-1},r}$, we have $\omega_{R_{N^{-1},r}}\gtrsim1$ on $D_\nu$. Thus
		\[
		\|F\|_{L^6(\omega_{R_{N^{-1},r}})}
		\gtrsim
		N^{-2r}
		\left(
		\int_0^{N^2}\int_0^{N^{1+\nu}}
		\left|
		\sum_{n=1}^{N-1} a_n
		e^{2\pi i\left(y_1\frac nN+y_2\frac{n^{1+\nu}}{N^{1+\nu}}\right)}
		\right|^6dy_2dy_1
		\right)^{\frac16}.
		\]
		Now make the change of variables $y_1=Nx$ and $y_2=N^{1+\nu}t$. Then $x\in[0,N]$ and $t\in[0,1]$. The integrand is $1$-periodic in $x$, because the spatial frequencies are the integers $n$. Hence the $x$ integration contains exactly $N$ copies of the unit interval, and we obtain
		\[
		\|F\|_{L^6(\omega_{R_{N^{-1},r}})}
		\gtrsim
		N^{-2r+\frac{3+\nu}{6}}
		\|S_N^\circ\|_{L^6([0,1]^2)}.
		\]
		Combining the lower and upper bounds gives
		\[
		\|S_N^\circ\|_{L^6([0,1]^2)}
		\lesssim_{\nu,\varepsilon}
		N^{\frac{r-1-\nu}{6}+\varepsilon}
		\left(\sum_{n=1}^{N} |a_n|^2\right)^{\frac12}.
		\]
		Adding back the endpoint term $n=N$ gives the same estimate for the full sum $S_N$.
		
		For $2\le p\le6$, interpolation between $L^2$ and $L^6$ gives the exponent
		\[
		\frac{(r-1-\nu)(p-2)}{4p}.
		\]
		For $p>6$, interpolation between $L^6$ and $L^\infty$ gives the exponent
		\[
		\frac12+\frac{r-4-\nu}{p}.
		\]
		This is exactly the stated value of $E_p$, completing the proof.
	\end{proof}
	\section{Proof of Theorem~\ref{doublefraction}}\label{proofdouble}
	Due to a minor difference in the exponents between Theorem~\ref{doublefraction} and Theorem~\ref{singlefraction}, Proposition~\ref{decouplingmodel} is no longer applicable. Fortunately, however, we can adapt the method employed in \cite{BGLSX20} to prove a similar proposition.
	
	To generalize this decoupling inequality to curves of the form $\Gamma(t) = (t^{1+\mu}, t^{1+\nu})$, we first define the associated anisotropic frequency region. Set
	\[
	r_1=\max\{2,1+\mu\},\qquad r_2=\max\{2,1+\nu\}.
	\]
	For any interval $I \subset [0,1]$, we define
	
	$$\Omega^\Gamma_I(\delta) := \left\{ (t^{1+\mu} + \delta_1, t^{1+\nu} + \delta_2) : t \in I, \; |\delta_1| \le \delta^{r_1},\ |\delta_2| \le \delta^{r_2} \right\}.$$
	
	Given the unit ball $B=B(0,1)$ in $\mathbb{R}^2$, we define the standard weight function $\omega_B(x) = (1+\|x\|)^{-200}$. For any rectangle $R$ centered at $x_0$ with side lengths $L_1$ and $L_2$ parallel to the coordinate axes, the adapted weight is defined as $\omega_R(x) = \omega_B(T_R^{-1}(x-x_0))$, where $T_R$ is the diagonal linear transformation mapping the standard basis vectors to lengths $L_1$ and $L_2$, respectively.
	
	\begin{proposition}\label{decouplingdouble}
		Let $\Gamma(t) = (t^{1+\mu}, t^{1+\nu})$ with $0 < \mu < \nu$. Let $\delta \in (0,1)$ be the partition scale, and let $\mathcal{I}_\delta$ denote the partition of $[0,1]$ into intervals of length $\delta$. For all $2 \le p \le 6$ and all $\varepsilon > 0$, there exists a constant $C_{\mu,\nu,p,\varepsilon} > 0$ such that
		\[
		\|F\|_{L^p(\omega_{R_\delta})}
		\le
		C_{\mu,\nu,p,\varepsilon}\delta^{-\varepsilon}
		\left(
		\sum_{J\in\mathcal I_\delta}
		\|F_J\|_{L^p(\omega_{R_\delta})}^2
		\right)^{\frac12}
		\]
		holds for all functions $F$ with $\operatorname{supp}(\widehat F)\subset\Omega^\Gamma_{[0,1]}(\delta)$, where $F_J$ denotes the Fourier restriction of $F$ to $\Omega^\Gamma_J(\delta)$. Here $R_\delta$ is a global spatial rectangle in $\mathbb R^2$ with side lengths $\delta^{-r_1}$ and $\delta^{-r_2}$.
	\end{proposition}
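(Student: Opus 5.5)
We will follow the strategy of \cite{BGLSX20}: a dyadic decomposition in the parameter near the degenerate point $t=0$, rescaling on each dyadic piece to a uniformly non-degenerate curve, and then an induction/summation over scales. On any interval $[\sigma,1]$ with $\sigma>0$ fixed, $\Gamma$ is a smooth regular curve whose Wronskian $\det(\Gamma',\Gamma'')=(1+\mu)(1+\nu)(\nu-\mu)t^{\mu+\nu-1}$ does not vanish. Hence the curve is a parabola-like graph after reparametrizing by $\xi_1=t^{1+\mu}$. In that case Proposition~\ref{decouplingparabolalike} applies, or equivalently Proposition~\ref{decouplingregular} with $r=2$. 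Because $\delta^{r_i}\le\delta^2$, the anisotropic neighborhood $\Omega^\Gamma_J(\delta)$ sits inside the $\delta^2$-neighborhood, and the weights $\omega_{R_\delta}$ are adapted to rectangles larger than $\delta^{-2}$-balls. We can therefore cover $R_\delta$ by $\delta^{-2}$-balls and sum the local estimates. This gives $\delta^{-\varepsilon}$ decoupling of the part of $F$ with frequencies over $t\in[\sigma,1]$, at a cost $C_\sigma$.

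For the degenerate region, we first split $[0,1]=[0,\delta]\cup\bigcup_{\delta\le 2^{-j}<1}[2^{-j},2^{-j+1}]$. There are $O(\log\delta^{-1})$ such pieces. We decouple these dyadic blocks from one another at $\delta^{-\varepsilon}$ cost, for example by a bilinear/trivial-decoupling induction as in \cite{BGLSX20}. Another option is to note that consecutive blocks, taken two at a time, form a curve piece $[\lambda,4\lambda]$ to which rescaling applies, and then to use Cauchy--Schwarz over the $\log$ many blocks, which costs only $(\log\delta^{-1})^{1/2}\lesssim\delta^{-\varepsilon}$. The block $[0,\delta]$ is a single $J$ and needs nothing.

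On a block $t\in[\lambda,2\lambda]$ we substitute $t=\lambda s$, $s\in[1,2]$. Then $\Gamma(\lambda s)=(\lambda^{1+\mu}s^{1+\mu},\lambda^{1+\nu}s^{1+\nu})$, and the anisotropic dilation $A_\lambda=\mathrm{diag}(\lambda^{-1-\mu},\lambda^{-1-\nu})$ maps this piece to $\Gamma|_{[1,2]}$, which is non-degenerate. The neighborhood $\delta^{r_1}\times\delta^{r_2}$ becomes $\delta^{r_1}\lambda^{-1-\mu}\times\delta^{r_2}\lambda^{-1-\nu}$. The key check is that this is at most $(\delta/\lambda)^2$ in both coordinates, since the new partition scale is $\delta/\lambda$. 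For the first coordinate this reads $\delta^{r_1-2}\le\lambda^{1+\mu-2}$. If $\mu\le1$ we have $r_1=2$ and the condition is $\lambda^{\mu-1}\ge1$, which holds. If $\mu>1$ then $r_1=1+\mu$ and the condition is $\delta^{\mu-1}\le\lambda^{\mu-1}$, which holds since $\lambda\ge\delta$. The second coordinate is identical with $\nu$. This is exactly why the exponents $r_i=\max\{2,1+\cdot\}$ are chosen. Thus after rescaling, the Fourier support lies in a $(\delta/\lambda)^2$-neighborhood of a fixed non-degenerate arc. We then apply the non-degenerate decoupling at scale $\delta/\lambda$ on the rescaled region. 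The dilation of physical space maps $R_\delta$ to a rectangle of sides $\delta^{-r_1}\lambda^{1+\mu}\times\delta^{-r_2}\lambda^{1+\nu}$, each side $\ge(\delta/\lambda)^{-2}$ by the same inequalities. So we can tile it by balls of radius $(\delta/\lambda)^{-2}$ and sum the local inequalities, using the standard comparability of weights $\sum_B\omega_B\lesssim\omega_{R}$ and $L^p$ norms of weights under affine changes. Undoing the dilation, the intervals of length $\delta/\lambda$ in $s$ become exactly the $J\in\mathcal I_\delta$ inside $[\lambda,2\lambda]$.

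The main obstacle is the first step, decoupling the dyadic blocks near $0$ from each other without losing more than $\delta^{-\varepsilon}$, together with the bookkeeping of weights on the anisotropic rectangle $R_\delta$. The supports of neighboring blocks are not well separated in the usual transversal sense. To handle this we would first try the scheme of \cite{BGLSX20}. There, one proves by induction on scales that the decoupling constant $K(\delta)$ satisfies $K(\delta)\le C_\varepsilon\delta^{-\varepsilon}$. One uses the trivial decoupling into the two pieces $[0,\delta^{\varepsilon'}]$ and $[\delta^{\varepsilon'},1]$, treats the second piece by the non-degenerate case with constant $C_{\varepsilon'}$, and applies the anisotropic rescaling above to the first piece. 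That rescaling maps $[0,\delta^{\varepsilon'}]$ to $[0,1]$ and $\delta$ to $\delta^{1-\varepsilon'}$, which produces a recursion $K(\delta)\le C_{\varepsilon'}+2K(\delta^{1-\varepsilon'})$ (with a square-function combination). Iterating $O(1/\varepsilon')$ times gives $\delta^{-\varepsilon}$ for small $\varepsilon'$. The self-similarity of $\Gamma$ under $A_\lambda$ is what makes this recursion close. We must verify that the rescaled neighborhood and rectangle are again of the form $\Omega^\Gamma(\delta^{1-\varepsilon'})$ and $R_{\delta^{1-\varepsilon'}}$, up to enlargements absorbed into constants, using the same inequalities as in the previous paragraph.
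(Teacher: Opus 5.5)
Your proposal is correct and follows the paper's proof. It uses dyadic blocks $[\lambda,2\lambda]$ in the parameter, the anisotropic rescaling onto the non-degenerate arc $(s^{1+\mu},s^{1+\nu})$ with $s\in[1,2]$, the same case check that the rescaled widths are at most $(\delta/\lambda)^2$ and that the dual local rectangles fit inside $R_\delta$, and then tiling plus Minkowski; the only cosmetic difference is that the paper stops the blocks at $\delta^{1-\eta}$ and treats $(0,\delta^{1-\eta}]$ trivially at cost $\delta^{-\eta/2}$. The step you call the main obstacle is not one: the paper handles it with the option you already list, the triangle inequality and Cauchy--Schwarz over the $O(\log\delta^{-1})$ dyadic blocks at cost $(\log\delta^{-1})^{1/2}$, so the induction-on-scales scheme is unnecessary.
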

	
	\begin{proof}
		To isolate the degeneracy of $\Gamma$ at the origin, fix an auxiliary parameter $0<\eta\ll\varepsilon$ and dyadically decompose
		\[
		(0,1]=(0,\delta^{1-\eta}]\cup\bigcup_a [a,2a],
		\qquad \delta^{1-\eta}\le a\le \frac12.
		\]
		The core interval $(0,\delta^{1-\eta}]$ contains $O(\delta^{-\eta})$ intervals of length $\delta$, so the trivial $\ell^1$ estimate followed by Cauchy-Schwarz gives a loss $O(\delta^{-\eta/2})$. Since $\eta$ is chosen sufficiently small compared with the final loss exponent $\varepsilon$, this is absorbed into $\delta^{-\varepsilon}$.
		
		We now consider a dyadic block $I_a=[a,2a]$. Write $t=as$, where $s\in[1,2]$. The effective parameter scale becomes
		\[
		\widetilde{\delta}=\frac{\delta}{a}.
		\]
		Under the physical rescaling
		\[
		\widetilde{x}_1=a^{1+\mu}x_1,\qquad
		\widetilde{x}_2=a^{1+\nu}x_2,
		\]
		the curve becomes
		\[
		\widetilde{\Gamma}(s)=(s^{1+\mu},s^{1+\nu}),\qquad s\in[1,2].
		\]
		This curve is smooth and non-degenerate on $[1,2]$, so Proposition~\ref{decouplingparabolalike} applies at scale $\widetilde{\delta}$.
		
		The anisotropic frequency widths after this rescaling are
		\[
		W_1=\delta^{r_1}a^{-(1+\mu)},\qquad
		W_2=\delta^{r_2}a^{-(1+\nu)}.
		\]
		We verify that these widths are bounded by the regular normal thickness $\widetilde{\delta}^2=\delta^2a^{-2}$. For the first coordinate, the required inequality is
		\[
		\delta^{r_1}a^{-(1+\mu)}\le \delta^2a^{-2}.
		\]
		If $\mu\le1$, then $r_1=2$, and the inequality follows from $a^{-(1+\mu)}\le a^{-2}$. If $\mu>1$, then $r_1=1+\mu$, and the inequality becomes $(\delta/a)^{\mu-1}\le1$, which follows from $a\ge\delta^{1-\eta}\ge\delta$. The second coordinate is identical, with $\nu$ replacing $\mu$. Hence the rescaled support lies inside the standard $\widetilde{\delta}^2$-neighborhood of a non-degenerate curve.
		
		Applying the regular decoupling theorem with loss exponent $\varepsilon/10$ gives a local estimate on rectangles whose side lengths, after returning to the original physical variables, are
		\[
		L_1(a)=\widetilde{\delta}^{-2}a^{-(1+\mu)}
		=
		a^{1-\mu}\delta^{-2},
		\qquad
		L_2(a)=\widetilde{\delta}^{-2}a^{-(1+\nu)}
		=
		a^{1-\nu}\delta^{-2}.
		\]
		From the same elementary case distinction as above, we have
		\[
		L_1(a)\le \delta^{-r_1},\qquad L_2(a)\le \delta^{-r_2}.
		\]
		Thus every local rectangle is contained, up to constants, in the global rectangle $R_\delta$.
		
		It remains to pass from the local rectangles to the global weighted rectangle. Cover $R_\delta$ by bounded-overlap translates of the local rectangle $R_a$. Applying the local decoupling estimate on each translate and then summing, the vector-valued Minkowski inequality, using $p/2\ge1$, gives
		\[
		\|F_{I_a}\|_{L^p(\omega_{R_\delta})}
		\lesssim
		\widetilde{\delta}^{-\varepsilon/10}
		\left(
		\sum_{J\subset I_a}
		\|F_J\|_{L^p(\omega_{R_\delta})}^2
		\right)^{\frac12}.
		\]
		Since $\widetilde{\delta}^{-\varepsilon/10}=(\delta/a)^{-\varepsilon/10}\le \delta^{-\varepsilon/10}$, this is sufficient on each dyadic block. Finally, summing over the $O(\log(\delta^{-1}))$ dyadic blocks by the triangle inequality and Cauchy-Schwarz introduces only a logarithmic loss, which is absorbed into $\delta^{-\varepsilon}$. This completes the proof.
	\end{proof}
	
	Now we can start the proof of Theorem~\ref{doublefraction}.
	
	\begin{proof}[Proof of Theorem~\ref{doublefraction}]
		Let
		\[
		S_N(x,t):=
		\sum_{n=1}^N a_n e^{2\pi i(x n^{1+\mu}+t n^{1+\nu})}
		\]
		and
		\[
		S_N^\circ(x,t):=
		\sum_{n=1}^{N-1} a_n e^{2\pi i(x n^{1+\mu}+t n^{1+\nu})}.
		\]
		The endpoint term $n=N$ has $L^p([0,1]^2)$ norm equal to $|a_N|$, and is therefore harmless. It suffices to prove the desired estimate for $S_N^\circ$.
		
		The $L^\infty$ bound is immediate:
		\[
		\|S_N\|_{L^\infty([0,1]^2)}
		\le
		N^{\frac12}
		\left(\sum_{n=1}^N |a_n|^2\right)^{\frac12}.
		\]
		For the $L^2$ bound, we use the fact that the frequencies $\{n^{1+\mu}\}_{n\ge1}$ are uniformly separated. By the standard Bessel inequality for separated exponentials on $[0,1]$, uniformly in $t$,
		\[
		\int_0^1\left|
		\sum_{n=1}^N a_n e^{2\pi i t n^{1+\nu}}e^{2\pi i x n^{1+\mu}}
		\right|^2dx
		\lesssim_{\mu}
		\sum_{n=1}^N |a_n|^2.
		\]
		Integrating in $t$ gives
		\[
		\|S_N\|_{L^2([0,1]^2)}
		\lesssim_{\mu}
		\left(\sum_{n=1}^N |a_n|^2\right)^{\frac12}.
		\]
		Therefore it remains to prove the $L^6$ endpoint estimate
		\[
		\|S_N^\circ\|_{L^6([0,1]^2)}
		\lesssim_{\mu,\nu,\varepsilon}
		N^{\frac{r_1+r_2-2-\mu-\nu}{6}+\varepsilon}
		\left(\sum_{n=1}^N |a_n|^2\right)^{\frac12},
		\]
		where
		\[
		r_1=\max\{2,1+\mu\},\qquad r_2=\max\{2,1+\nu\}.
		\]
		
		Set $\delta=N^{-1}$. If $\delta$ is not dyadic, we replace it by a comparable dyadic number; this changes only the implicit constants. Choose a smooth compactly supported cutoff $\chi_0:\mathbb R^2\to\mathbb C$ whose support is contained in a sufficiently small ball around the origin, and whose inverse Fourier transform $\check{\chi}_0$ is real and satisfies
		\[
		|\check{\chi}_0(y_1,y_2)|\ge c_0>0,
		\qquad (y_1,y_2)\in[0,1]^2.
		\]
		Define $F=\sum_{n=1}^{N-1} F_n$ through its Fourier transform:
		\[
		\widehat{F}(\xi_1, \xi_2)
		=
		\sum_{n=1}^{N-1} a_n
		\chi_0\left(
		N^{r_1}\left(\xi_1-\left(\frac nN\right)^{1+\mu}\right),
		N^{r_2}\left(\xi_2-\left(\frac nN\right)^{1+\nu}\right)
		\right).
		\]
		By construction, $\operatorname{supp}(\widehat F)\subset\Omega^\Gamma_{[0,1]}(\delta)$, up to a harmless fixed enlargement of the frequency neighborhoods. Choose $R_\delta$ to be a coordinate rectangle with side lengths $N^{r_1}$ and $N^{r_2}$ whose center is the midpoint of
		\[
		D=[0,N^{1+\mu}]\times[0,N^{1+\nu}].
		\]
		Proposition~\ref{decouplingdouble} gives
		\[
		\|F\|_{L^6(\omega_{R_\delta})}
		\lesssim_{\mu,\nu,\varepsilon}
		N^\varepsilon
		\left(
		\sum_{n=1}^{N-1}
		\|F_n\|_{L^6(\omega_{R_\delta})}^2
		\right)^{\frac12}.
		\]
		The inverse Fourier transform gives
		\[
		F_n(y_1,y_2)
		=
		a_n N^{-r_1-r_2}
		e^{2\pi i\left(y_1\left(\frac nN\right)^{1+\mu}
			+y_2\left(\frac nN\right)^{1+\nu}\right)}
		\check{\chi}_0\left(\frac{y_1}{N^{r_1}},\frac{y_2}{N^{r_2}}\right).
		\]
		Using $\omega_{R_\delta}\le1$ and the rapid decay of $\check{\chi}_0$, we obtain
		\[
		\|F_n\|_{L^6(\omega_{R_\delta})}
		\le
		\|F_n\|_{L^6(\mathbb R^2)}
		\lesssim
		|a_n|N^{-r_1-r_2}N^{\frac{r_1+r_2}{6}}
		=
		|a_n|N^{-\frac56(r_1+r_2)}.
		\]
		Therefore
		\[
		\|F\|_{L^6(\omega_{R_\delta})}
		\lesssim_{\mu,\nu,\varepsilon}
		N^\varepsilon
		N^{-\frac56(r_1+r_2)}
		\left(\sum_{n=1}^{N-1} |a_n|^2\right)^{\frac12}.
		\]
		
		We now obtain the lower bound. Since $r_1\ge1+\mu$ and $r_2\ge1+\nu$, the rectangle $D$ is contained in a fixed dilate of $R_\delta$, and therefore $\omega_{R_\delta}\gtrsim1$ on $D$. The cutoff factor is also bounded below on $D$. Hence
		\[
		\|F\|_{L^6(\omega_{R_\delta})}
		\gtrsim
		\|F\|_{L^6(D)}
		\gtrsim
		N^{-r_1-r_2}
		N^{\frac{2+\mu+\nu}{6}}
		\|S_N^\circ\|_{L^6([0,1]^2)},
		\]
		after the change of variables $y_1=N^{1+\mu}x$ and $y_2=N^{1+\nu}t$.
		
		Combining the upper and lower bounds gives
		\[
		\|S_N^\circ\|_{L^6([0,1]^2)}
		\lesssim_{\mu,\nu,\varepsilon}
		N^{\frac{r_1+r_2-2-\mu-\nu}{6}+\varepsilon}
		\left(\sum_{n=1}^N |a_n|^2\right)^{\frac12}.
		\]
		Adding back the endpoint term $n=N$ gives the full estimate for $S_N$.
		
		Finally, interpolation with the $L^2$ estimate gives, for $2\le p\le6$,
		\[
		E_p=\frac{(p-2)(r_1+r_2-2-\mu-\nu)}{4p}.
		\]
		Interpolation with the $L^\infty$ estimate gives, for $p>6$,
		\[
		E_p=\frac12+\frac{r_1+r_2-5-\mu-\nu}{p}.
		\]
		This completes the proof.
	\end{proof}
	\section{Proof of Theorem~\ref{perturbation}}\label{proofperturbation}
	Theorem~\ref{perturbation} is, in effect, an extension of the discrete Strichartz estimate corresponding to the model curve $(t,t^k)$; we will use Proposition~\ref{decouplingmodel} to prove the required endpoint estimate.
	
	\begin{proof}[Proof of Theorem~\ref{perturbation}]
		Let
		\[
		S_N(x,t)
		=
		\sum_{n=0}^N a_n e^{2\pi i(nx+(n^k+\theta_n)t)}
		\]
		and
		\[
		S_N^\circ(x,t)
		=
		\sum_{n=1}^{N-1} a_n e^{2\pi i(nx+(n^k+\theta_n)t)}.
		\]
		The endpoint terms $n=0$ and $n=N$ have $L^p([0,1]^2)$ norms bounded by $|a_0|+|a_N|$, and are therefore harmless. It suffices to prove the endpoint $L^6$ estimate for $S_N^\circ$.
		
		Orthogonality in the spatial variable gives
		\[
		\|S_N\|_{L^2([0,1]^2)}
		=
		\left(\sum_{n=0}^N |a_n|^2\right)^{\frac12},
		\]
		and the Cauchy-Schwarz inequality gives
		\[
		\|S_N\|_{L^\infty([0,1]^2)}
		\le
		N^{\frac12}
		\left(\sum_{n=0}^N |a_n|^2\right)^{\frac12}.
		\]
		Thus it suffices to prove
		\[
		\|S_N^\circ\|_{L^6([0,1]^2)}
		\lesssim_{k,\varepsilon,C}
		N^\varepsilon
		\left(\sum_{n=0}^N |a_n|^2\right)^{\frac12}.
		\]
		
		Set $\delta=N^{-1}$ and $A=N^k$. If $\delta$ is not dyadic, we replace it by a comparable dyadic number; this changes only the implicit constants. Choose a smooth compactly supported cutoff $\chi_0:\mathbb R^2\to\mathbb C$ whose support is contained in a sufficiently small ball around the origin, and whose inverse Fourier transform $\check{\chi}_0$ is real and satisfies
		\[
		|\check{\chi}_0(y_1,y_2)|\ge c_0>0,
		\qquad (y_1,y_2)\in[0,1]^2.
		\]
		Define $F=\sum_{n=1}^{N-1} F_n$ by
		\[
		\widehat F(\xi_1,\xi_2)
		=
		\sum_{n=1}^{N-1} a_n
		\chi_0\left(
		A\left(\xi_1-\frac nN\right),
		A\left(\xi_2-\frac{n^k+\theta_n}{N^k}\right)
		\right).
		\]
		The center of the $n$-th frequency bump is
		\[
		\left(\frac nN,\frac{n^k+\theta_n}{N^k}\right).
		\]
		Its vertical distance from the unperturbed curve $(s,s^k)$ at $s=\frac nN$ is bounded by
		\[
		\frac{|\theta_n|}{N^k}\le C N^{-k}=C\delta^k.
		\]
		The horizontal width of each bump is $A^{-1}=N^{-k}\le N^{-1}$, and the corresponding vertical error caused by the curvature of $(s,s^k)$ is also $O(N^{-k})$. Hence the Fourier support of $F$ is contained in a fixed $C$-dependent enlargement of the $\delta^k$-neighborhood of the curve $(s,s^k)$.
		
		Let $R_{N^{-1},k}$ be a rectangle with side lengths
		\[
		N^2\times N^k
		\]
		whose center is the midpoint of
		\[
		D_k=[0,N^2]\times[0,N^k].
		\]
		Applying the local weighted form of Proposition~\ref{decouplingmodel}, with $\nu=k-1$, gives
		\[
		\|F\|_{L^6(\omega_{R_{N^{-1},k}})}
		\lesssim_{k,\varepsilon,C}
		N^\varepsilon
		\left(\sum_{n=1}^{N-1}\|F_n\|_{L^6(\omega_{R_{N^{-1},k}})}^2\right)^{\frac12}.
		\]
		By the inverse Fourier transform,
		\[
		F_n(y_1,y_2)
		=
		a_n A^{-2}
		e^{2\pi i\left(y_1\frac nN+y_2\frac{n^k+\theta_n}{N^k}\right)}
		\check{\chi}_0\left(\frac{y_1}{A},\frac{y_2}{A}\right).
		\]
		Since $A=N^k$ and the weight is adapted to a rectangle of side lengths $N^2\times N^k$, we have
		\[
		\|F_n\|_{L^6(\omega_{R_{N^{-1},k}})}
		\lesssim
		|a_n|A^{-2}(N^2N^k)^{\frac16}
		=
		|a_n|N^{-2k+\frac{k+2}{6}}.
		\]
		Consequently,
		\[
		\|F\|_{L^6(\omega_{R_{N^{-1},k}})}
		\lesssim_{k,\varepsilon,C}
		N^\varepsilon N^{-2k+\frac{k+2}{6}}
		\left(\sum_{n=1}^{N-1} |a_n|^2\right)^{\frac12}.
		\]
		
		For the lower bound, restrict the physical integral to $D_k=[0,N^2]\times[0,N^k]$. Since $k\ge2$, we have $y_1/A\in[0,1]$ and $y_2/A\in[0,1]$ on $D_k$, and therefore the cutoff factor is bounded below on $D_k$. Moreover, by the choice of $R_{N^{-1},k}$, we have $\omega_{R_{N^{-1},k}}\gtrsim1$ on $D_k$. Hence
		\[
		\|F\|_{L^6(\omega_{R_{N^{-1},k}})}
		\gtrsim
		A^{-2}
		\left(
		\int_0^{N^2}\int_0^{N^k}
		\left|
		\sum_{n=1}^{N-1} a_n
		e^{2\pi i\left(y_1\frac nN+y_2\frac{n^k+\theta_n}{N^k}\right)}
		\right|^6dy_2dy_1
		\right)^{\frac16}.
		\]
		Now make the change of variables $y_1=Nx$ and $y_2=N^k t$. Then $x\in[0,N]$ and $t\in[0,1]$. The integrand is $1$-periodic in $x$, so the $x$ integration contains exactly $N$ copies of the unit interval. Hence
		\[
		\|F\|_{L^6(\omega_{R_{N^{-1},k}})}
		\gtrsim
		N^{-2k}N^{\frac{k+2}{6}}
		\|S_N^\circ\|_{L^6([0,1]^2)}.
		\]
		Combining the upper and lower bounds yields
		\[
		\|S_N^\circ\|_{L^6([0,1]^2)}
		\lesssim_{k,\varepsilon,C}
		N^\varepsilon
		\left(\sum_{n=0}^N |a_n|^2\right)^{\frac12}.
		\]
		Adding back the two endpoint terms $n=0$ and $n=N$ gives the stated estimate for the full sum. Interpolation with the $L^2$ and $L^\infty$ bounds gives the stated estimate for every $p\ge2$.
	\end{proof}
	\subsection{Two Applications}
	We conclude with a concrete application of Theorem~\ref{perturbation}. Let
	\[
	D_x=\frac{1}{2\pi i}\partial_x
	\]
	so that
	\[
	D_x e^{2\pi i n x}=n e^{2\pi i n x}.
	\]
	Fix an integer $k\ge2$ and a real parameter $\lambda$. Consider the linear dispersive equation
	\[
	i\partial_t u+2\pi\left(D_x^k+\lambda\sin(D_x^2)\right)u=0,
	\qquad (x,t)\in \mathbb T\times[0,1].
	\]
	Here the perturbation is nonlinear as a function of the frequency operator, although the equation itself remains linear. The operator $\sin(D_x^2)$ is a bounded Fourier multiplier, since
	\[
	\sin(D_x^2)e^{2\pi i n x}=\sin(n^2)e^{2\pi i n x}.
	\]
	For initial data
	\[
	u_0(x)=\sum_{n=0}^N a_n e^{2\pi i n x},
	\]
	the corresponding solution is
	\[
	u(x,t)
	=
	\sum_{n=0}^N
	a_n
	e^{2\pi i\left(nx+\left(n^k+\lambda\sin(n^2)\right)t\right)}.
	\]
	This is exactly the setting of Theorem~\ref{perturbation}, with
	\[
	\theta_n=\lambda\sin(n^2),
	\qquad
	|\theta_n|\le |\lambda|.
	\]
	Therefore, for every $p\ge2$ and every $\varepsilon>0$,
	\[
	\|u\|_{L^p(\mathbb T\times[0,1])}
	\lesssim_{k,p,\varepsilon,\lambda}
	N^\varepsilon
	\left(1+N^{\frac12-\frac3p}\right)
	\|u_0\|_{L^2(\mathbb T)}.
	\]
	
	This example illustrates the advantage of the perturbative decoupling approach. After normalization, the perturbed frequency points satisfy
	\[
	\left(
	\frac nN,
	\frac{n^k+\lambda\sin(n^2)}{N^k}
	\right)
	=
	\left(
	\frac nN,
	\left(\frac nN\right)^k
	\right)
	+
	\left(0,O(N^{-k})\right),
	\]
	so they remain inside the natural $N^{-k}$-neighborhood of the model curve
	\[
	(s,s^k).
	\]
	Thus the bounded spectral perturbation does not change the decoupling scale.
	
	On the other hand, this estimate is not easily accessible through purely arithmetic methods such as efficient congruencing. Expanding the sixth moment would introduce phase interactions involving terms such as
	\[
	\sin(n_1^2)+\sin(n_2^2)+\sin(n_3^2)
	-
	\sin(m_1^2)-\sin(m_2^2)-\sin(m_3^2),
	\]
	which do not lead to a polynomial Diophantine system. The decoupling argument avoids this obstruction by using only the geometric fact that the perturbed points remain inside the same frequency tube.
	
	Similarly, one may replace $\lambda\sin(D_x^2)$ by much more irregular bounded Fourier multipliers. For instance, let $\Pi_{\mathbb P}$ be the prime-frequency projection defined by
	\[
	\Pi_{\mathbb P}e^{2\pi i n x}
	=
	\mathbf 1_{\mathbb P}(n)e^{2\pi i n x}.
	\]
	Then the equation
	\[
	i\partial_t u+2\pi\left(D_x^k+\lambda\Pi_{\mathbb P}\right)u=0
	\]
	has frequency-localized solutions of the form
	\[
	u(x,t)
	=
	\sum_{n=0}^N
	a_n
	e^{2\pi i\left(nx+\left(n^k+\lambda\mathbf 1_{\mathbb P}(n)\right)t\right)}.
	\]
	Since $\lambda\mathbf 1_{\mathbb P}(n)$ is bounded, Theorem~\ref{perturbation} again gives
	\[
	\|u\|_{L^p(\mathbb T\times[0,1])}
	\lesssim_{k,p,\varepsilon,\lambda}
	N^\varepsilon
	\left(1+N^{\frac12-\frac3p}\right)
	\|u_0\|_{L^2(\mathbb T)}.
	\]
	This provides a discrete Strichartz estimate for dispersive flows whose lower-order spectral perturbations have no polynomial structure.
	\section{Final remarks}\label{finalremarks}
	\subsection{The polynomial comparison}\label{polynomialproof}
	We briefly recall the decoupling mechanism behind Remark~\ref{polynomial}.  The continuous input is the following standard finite-type consequence of curve decoupling.  Let $\gamma(t)=(\varphi_1(t),\varphi_2(t))$ be a polynomial curve on $[0,1]$ such that $\varphi_1'$ and $\varphi_2'$ are linearly independent, and let
	$$
	W(t)=\varphi_1'(t)\varphi_2''(t)-\varphi_2'(t)\varphi_1''(t)
	$$
	be its Wronskian.  Since $W$ is a non-zero polynomial, it has only finitely many zeros.  After decomposing $[0,1]$ into finitely many intervals and using the finite-type decoupling theorem near the possible zeros of $W$, one obtains, for $2\le p\le6$,
	$$
	D_p^{(\gamma,r)}(\delta)\lesssim_{\gamma,p,\varepsilon}\delta^{-\varepsilon},
	$$
	where $r$ is a finite type parameter depending only on the polynomial curve.
	
	To pass from this continuous estimate to the discrete polynomial estimate, one applies the same localization argument used throughout the paper.  If $P$ and $Q$ have the same degree, a fixed rational linear change of variables first reduces the pair to one with distinct leading degrees.  On each dyadic block $n\sim M$, the normalized curve
	$$
	u\mapsto \left(\frac{P(Mu)}{M^{k_1}},\frac{Q(Mu)}{M^{k_2}}\right)
	$$
	has uniformly controlled finite type on $u\in[1/2,1]$, so the preceding decoupling estimate gives the $L^6$ bound with only an $M^\varepsilon$ loss.  Summing the dyadic blocks gives the $p=6$ endpoint.  The full range $p\ge2$ then follows by interpolation with the elementary $L^2$ and $L^\infty$ estimates.  This explains why the polynomial estimate has the decoupling exponent $\left(\frac{1}2-\frac{3}p\right)_+$, although it is not treated as one of the main new results of the paper.
	
	\subsection{Some comments on the results}
	The conjectured bounds presented in the \hyperref[table]{table} can be derived using the circle method or other heuristic approaches; we will not elaborate on these details here. By examining the proofs of these theorems and specifically noting the comparison between the bounds obtained via the decoupling method and the efficient congruencing method---we can clearly discern the respective strengths of each approach.
	
	In summary, the decoupling method relies primarily on decoupling inequalities and is independent of the number-theoretic significance of the expressions involved; consequently, it remains highly effective even in cases involving fractional powers. (In such instances, the efficient congruencing method proves entirely ineffectual---yielding only trivial bounds---as it fundamentally relies on the congruence properties of integers.)
	
	However, the decoupling method also suffers from a rather evident drawback: specifically (in the two-dimensional setting), it yields sharp bounds only for cases where $p \le 6$. Yet, in certain scenarios, such as Conjecture~\ref{kdvconjecture}, we aim to establish sharp bounds for cases extending up to $p \le 8$; in these situations, the efficient congruencing method proves to be the more effective tool.
	\section*{Acknowledgment}
	The author wishes to thank Prof. Daniel Tataru and Prof. Ruixiang Zhang for their suggestions, discussions, and continued support during the development of this paper. The author also thanks Prof. Shaoming Guo, Dr. Mingfeng Chen, and Dr. Minxing Shen for providing research background and reference materials during the preliminary stages of the study.
	\bibliographystyle{plain}
	\bibliography{references}
\end{document}